\documentclass[11pt]{article}

\usepackage[T1]{fontenc}
\usepackage{lmodern}

\usepackage{amsfonts,amssymb,amsmath,amsthm}
\usepackage{mathtools}

\usepackage{graphicx}
\usepackage{float,wrapfig}
\usepackage[font=small, labelfont=bf]{caption}
\usepackage[font=small]{subcaption}
\usepackage{tikz}
\usetikzlibrary{shapes,shadows,arrows}

\usepackage{multirow}
\usepackage{tabularx}
\usepackage{array}
\usepackage{booktabs}

\usepackage{xcolor}
\usepackage{enumerate}
\usepackage{enumitem}
\usepackage{lineno}
\usepackage{comment}
\usepackage{commath}
\usepackage{calligra}
\usepackage{authblk}
\usepackage{fancyhdr}

\usepackage{cite}

\newtheorem{theorem}{Theorem}[section]
\newtheorem{lemma}{Lemma}[section]

\theoremstyle{definition}

\newtheorem{example}{Example}[section]

\theoremstyle{remark}
\newtheorem{remark}{Remark}[section]

\allowdisplaybreaks
\numberwithin{equation}{section}

\def\be{\begin{equation}}\def\ee{\end{equation}}
\def\ba{\begin{array}}\def\ea{\end{array}}
\def\bfg{\begin{figure}}\def\efg{\end{figure}}
\def\2pth{two point Hermite polynomial}
\def\c01{\mathcal{C}^{r}([0, 1])}

\def\01{[0, 1]}

\usepackage[colorlinks]{hyperref}
\usepackage[noabbrev]{cleveref}
\hypersetup{urlcolor=blue, colorlinks=true, citecolor=blue, linkcolor=black}

\crefname{theorem}{Theorem}{Theorems}
\crefname{lemma}{Lemma}{Lemmas}
\crefname{corollary}{Corollary}{Corollaries}
\crefname{proposition}{Proposition}{Propositions}
\crefname{conjecture}{Conjecture}{Conjectures}
\crefname{definition}{Definition}{Definitions}
\crefname{example}{Example}{Examples}
\crefname{remark}{Remark}{Remarks}

\Crefname{theorem}{Theorem}{Theorems}
\Crefname{lemma}{Lemma}{Lemmas}
\Crefname{corollary}{Corollary}{Corollaries}
\Crefname{proposition}{Proposition}{Propositions}
\Crefname{conjecture}{Conjecture}{Conjectures}
\Crefname{definition}{Definition}{Definitions}
\Crefname{example}{Example}{Examples}
\Crefname{remark}{Remark}{Remarks}

\author[$\dagger$]{Prakash Nainwal}
\author[$\dagger$]{Akash Anand}
\affil[$\dagger$]{%
Department of Mathematics and Statistics\\
Indian Institute of Technology Kanpur, India
}
 \date{}

\begin{document}

\title{A Generalized FC-Gram Approximation Framework with Analysis and Applications}
\maketitle

\begin{abstract}
The FC-Gram algorithm constructs high-order trigonometric approximations of nonperiodic functions by periodically extending them to a larger interval, with the quality of the blending continuation of Gram polynomials over the extension interval directly governing the approximation accuracy. We introduce GenFC, a generalized FC-Gram framework in which the continuation of each Gram polynomial is shaped by a cutoff function satisfying prescribed boundary flatness conditions. We establish a convergence theorem showing that for any such family the GenFC approximation error satisfies $O(n^{-\min(r+\beta,\,d)})$ in the supremum norm on the original interval, where $f \in C^r([0,1])$ has an integrable $(r+1)$th derivative, $d$ is the number of Gram polynomials, and $\beta \in [0,1]$ is the Fourier decay exponent of $f^{(r+1)}$. The modified FC-Gram algorithm, recently introduced by the authors, is recovered as a special case, and several explicit families satisfying these conditions are constructed in the paper. Numerical experiments across smooth, limited-regularity, and rapidly oscillating test cases confirm the theoretical predictions. The framework is further applied to high-order solvers for linear ODEs and parabolic PDEs via backward differentiation formulae (BDF) time-stepping, demonstrating high-order accuracy throughout.
\end{abstract}

% ********* INTRODUCTION******************* %  

 \vspace*{0.05in}
  \noindent\textbf{{MSC.}} 65D15, 65T40, 65L10, 65L06, 65M70\\
    \noindent\textbf{{Keywords:}} FC-Gram algorithm, Gibbs phenomenon, shape function, two-point boundary value problems, PDE solver

\section{Introduction}
In scientific computing, one frequently approximates a function known only at the points of a discrete grid, and the accuracy of this approximation bounds that of every computation built upon it. Polynomial interpolation on uniform grids is a natural choice for this purpose. However, the classical example of Runge~\cite{runge1901} shows that this approach can fail even for analytic functions, with the interpolating polynomial developing large oscillations near the endpoints as the number of nodes grows (see~\cite{platte2011impossibility} for a detailed discussion of approximating functions from equispaced data). Moving to non-uniform grids, such as Chebyshev nodes, resolves this for polynomial interpolants and is a widely adopted remedy.

In this paper, we study function approximation by trigonometric interpolation, a powerful alternative to interpolation by algebraic polynomials. On a uniform grid, trigonometric interpolants achieve spectral accuracy and can be evaluated in $O(n \log n)$ operations, making them especially attractive in practice. For periodic functions this works without issue. However, when the function is not periodic, its naive periodic extension has a jump discontinuity at the boundary of the computational interval, and the Fourier interpolant responds with spurious oscillations regardless of any grid refinement. This is the Gibbs phenomenon~\cite{gottlieb1997gibbs, hewitt1979gibbs}, which has motivated a range of remedies. Filtering the Fourier coefficients~\cite{tadmor2007filters} suppresses the oscillations in the interior of the interval but does not genuinely restore accuracy near the boundary. Reconstruction via Gegenbauer polynomials~\cite{gelb2006robust, gottlieb1997gibbs, gottlieb1992gibbs} can recover exponential accuracy from the truncated Fourier series, but the associated linear systems become exponentially ill-conditioned and are difficult to solve stably. Pad\'{e} and rational approximation methods \cite{driscoll2001pade, geer1995rational} have also been studied in this setting.

A different line of attack is to periodically extend the function to a larger interval $[0,\,b]$, $b > 1$, and obtain the approximation on $[0,\,1]$ from the resulting periodic data via FFT. The Fourier interpolant of the extended function serves as a high-order approximation to $f$ on the original interval. Methods following this approach are known as Fourier continuation (FC) or Fourier extension methods, and they have been developed and applied by a number of researchers \cite{anand2019fourier, boyd2002comparison, bruno2007accurate, garbey2000some, gruberger2021two, huybrechs2010fourier, lyon2011fast, matthysen2016fast}. 
Among these, the FC-Gram algorithm has been employed in the construction of high-order solvers for PDEs on complex geometries~\cite{albin2011spectral, amlani2016fc, bruno2022, bruno2010high, brunopaul2022,fontana2022, gaggioli2022}.

The FC-Gram construction for $f$ defined on $[0,\,1]$ works in two steps. Function data near each endpoint is projected onto a basis of Gram polynomials, and each basis polynomial is then given a blending-to-zero continuation over a short extension interval $[1,\,b]$. In the original algorithm, these continuations are determined implicitly by a least-squares procedure \cite{amlani2016fc, bruno2010high}. A key design choice is to keep the number of extension points fixed independent of $n$, which allows the blending continuations of the Gram polynomials to be precomputed offline and reused for any $n$, and keeps the FFT size moderate, making the algorithm computationally attractive in practice. Numerical experiments in the literature consistently demonstrate rapid convergence of the resulting approximations, though a rigorous theoretical justification for this behavior remains an open problem.

A number of variations of this construction have since been proposed, including \cite{albin2014discrete, amlani2016fc, Nainwal_Anand_ModFC_2025}. In~\cite{Nainwal_Anand_ModFC_2025} we replaced the implicit least-squares continuations of Gram polynomials with explicit ones constructed from two-point Hermite polynomials, which made a rigorous convergence analysis possible. We call this variant the modified FC-Gram algorithm, or ModFC. The resulting continuation is, however, uniquely determined by the derivative values of the Gram polynomials at the endpoints, leaving no flexibility in shaping the blending continuation over $[1,\,b]$. In practice this matters because the approximation error depends not only on the asymptotic convergence rate but also on the supremum norm of the continuation over $[1,\,b]$. A large supremum norm inflates the constant in the error bound and can cause the method to stagnate at an error level well above machine precision before the asymptotic regime is reached. Functions with steep gradients or rapid oscillations near the boundary tend to produce large Gram projection coefficients, and the ModFC continuation inherits this magnitude. Two such cases are demonstrated in the numerical experiments.

The present paper develops a generalized FC-Gram framework, GenFC, in 
which the blending continuation of each Gram polynomial is constructed by multiplying it by a shape function carrying a free parameter that governs the behavior of the continuation over $[1,\,b]$. The choice of this parameter provides explicit control over the supremum norm of the blending continuation over $[1,\,b]$, and an appropriate choice leads to improved approximation accuracy. The main result of the paper, Theorem~\ref{thm:main}, shows that for $f \in C^r([0,1])$ with integrable $(r+1)$th derivative satisfying a Fourier decay condition with exponent $\beta \in [0,1]$, the GenFC approximation error satisfies
\[
 \bigl\|\tau_n^b(e[p]f) - f\bigr\|_{\infty,[0,\,1]}
 = O\left(n^{-\min(r+\beta,\,d)}\right),
\]
where $d$ is the number of Gram polynomials, and holds for any admissible shape-function family. In particular, ModFC of~\cite{Nainwal_Anand_ModFC_2025} is recovered as the special case in which the shape functions are constructed from two-point Hermite polynomial continuations, and three additional families satisfying the admissibility conditions are constructed in the paper. Numerical experiments on smooth functions, functions with limited regularity, and functions with rapid oscillations near the boundary confirm the predicted convergence rates. The proposed approximation scheme is further used to build high-order solvers for linear ODEs and parabolic PDEs in one dimension, demonstrating high-order accuracy in both settings.

The paper is organized as follows. Section~\ref{sec:fc_extension} introduces the GenFC framework. Section~\ref{sec:errorAnalysis} develops the convergence analysis. Section~\ref{sec:ShapeFunc} 
constructs several admissible shape-function families. Section~\ref{sec:numerics_GenFC} presents numerical experiments confirming the theoretical predictions. Section~\ref{sec:GenFC_application} demonstrates the applicability of the proposed algorithm to high-order solvers for linear ODEs and parabolic PDEs. Section~\ref{sec:conclusion} offers concluding remarks.

\section{Generalized FC-Gram Framework}\label{sec:fc_extension} 
Let $f$ be a smooth function defined on $[0,\,1]$, and consider the equispaced grid on $[0,\,1]$ with spacing $h =1/n$, where $n\in\mathbb{N}$. The objective is to construct a trigonometric polynomial that interpolates $f$ on this grid. The FC-Gram algorithm computes this interpolant in the following two steps:

\subsection{Extension}
The first step is to construct a periodic extension of $f$ on a larger interval $[0,\,b]$, $b>1$, defined by
\begin{align}\label{eq:fc}
f^e(x)=
\begin{cases}
f(x), & x\in[0,\,1],\\
p(x), & x\in(1,b).
\end{cases}
\end{align}
Since several choices are possible for the continuation function $p$, the FC-Gram algorithm 
\cite{amlani2016fc,lyon2010high,Nainwal_Anand_ModFC_2025} constructs $p$ in two steps:
(i) projecting a small portion of $f$ near the boundary onto the space of Gram polynomials, and 
(ii) constructing a blending-to-zero continuation of Gram polynomials.

\subsubsection{Gram polynomials}

For $d\in\mathbb{N}$, let $d$ equispaced nodes on $[-1,1]$ be
\[
y_j=-1+\frac{2j}{d-1},\qquad j=0,\dots,d-1,
\]
and define the discrete inner product
\[
\langle p, q \rangle=\sum_{j=0}^{d-1}p(y_j)q(y_j).
\]
The Gram polynomials $\{p_\ell\}_{\ell=0}^{d-1}$ satisfy $ \langle p_k, p_\ell\rangle =\delta_{k\ell},\, p_\ell\in\mathcal{P}_\ell$, where $\mathcal{P}_\ell$ denotes the space of polynomials of degree at most $\ell$. The constant polynomial with $\langle p_0,p_0\rangle=1$ is $p_0(y)=1/\sqrt{d}$. The linear polynomial satisfying $\langle p_1,p_0\rangle=0$ and $\langle p_1,p_1\rangle=1$ is
\[
p_1(y)=\sqrt{\frac{3(d-1)}{d(d+1)}}\,y,\qquad d\ge2,
\]
and the quadratic polynomial satisfying $\langle p_2,p_0\rangle=0$, $\langle p_2,p_1\rangle=0$, and $\langle p_2,p_2\rangle=1$ is
\[
p_2(y)=\sqrt{\frac{5(d-1)^3}{4(d+1)(d^2-4)}}\left(3y^2-\frac{d+1}{d-1}\right),\qquad d\ge3.
\]

Higher-degree Gram polynomials may be obtained via the Gram--Schmidt orthogonalization
\begin{align*}
q_\ell(y)&=y^\ell-\sum_{k=0}^{\ell-1}\langle y^\ell,p_k\rangle p_k(y),\qquad
p_\ell(y)=\frac{q_\ell(y)}{\sqrt{\langle q_\ell,q_\ell\rangle}}.
\end{align*}
They also satisfy a three-term recurrence relation \cite{gautschi2004orthogonal, hildebrand1987introduction}. 
Since the inner product $\langle\cdot,\cdot\rangle$ only requires the values $p_\ell(y_j)$, $j=0,\dots,d-1$, these can be computed efficiently using recurrence relations or the QR-based procedure described in \cite{amlani2016fc}.

\subsubsection{Projection onto Gram Polynomials}
Once we obtain the set of Gram polynomials, we project small boundary data coming from $f$ onto the space of these polynomials. Given $n \in \mathbb{N}$, 
let $\delta := (d-1)/n$. We introduce affine maps
\begin{equation}
\varphi_L(x) = \frac{2(x-b)}{\delta} - 1, \quad 
\varphi_R(x) = \frac{2(x-1)}{\delta} + 1,
\end{equation}
where $\varphi_L$ maps $[b,\, b+\delta]$ onto $[-1,\,1]$ and $\varphi_R$ maps $[1-\delta,\, 1]$ onto $[-1,\,1]$. Note that under the $b$-periodic identification, the interval $[b,\, b+\delta]$ corresponds to the left neighborhood $[0, \delta]$ of the original domain, so $\varphi_L$ effectively maps the left boundary data of $f$ onto $[-1, 1]$. The \emph{left} and \emph{right Gram polynomials} are defined as
\begin{equation}
p_\ell^L(x) := (p_\ell \circ \varphi_L)(x), \quad 
p_\ell^R(x) := (p_\ell \circ \varphi_R)(x), \quad 
\ell = 0, \ldots, d-1.
\end{equation}
These are polynomials of degree $\ell$ that extend to all of $\mathbb{R}$; the intervals $[b,b+\delta]$ and $[1-\delta,1]$ serve as reference intervals for spatial scaling, but $p_\ell^L$ and $p_\ell^R$ will be evaluated on the extension interval $[1,\,b]$ when constructing the continuation in Section~\ref{sec:GramPolyCont}.

The projection operators are defined as:
\begin{equation}\label{eq:projection_operators}
(P_d^L f)(x) = \sum_{\ell=0}^{d-1} a_\ell^L p_\ell^L(x), \quad 
(P_d^R f)(x) = \sum_{\ell=0}^{d-1} a_\ell^R p_\ell^R(x),
\end{equation}
where $a_\ell^L := \langle f(({\cdot}+1)\delta/2), p_\ell \rangle$ and 
$a_\ell^R := \langle f(1 + ({\cdot}-1)\delta/2), p_\ell \rangle$ are obtained by sampling $f$ on $[0,\delta]$ and $[1-\delta, 1]$ respectively.

\subsubsection{Continuation of Gram Polynomials}\label{sec:GramPolyCont}
In the FC-Gram framework, in order to obtain $b$-periodic continuation $f^e$, the required extension $p$ on $[1, b]$ is constructed utilizing the boundary projections coefficient $a_\ell^L$ and $a_\ell^R$ along with blending-to-zero continuation of the Gram polynomials defined on $[1, b]$. More precisely, the FC-Gram algorithm constructs $p$ as:
\begin{align}\label{p}
p(x)=\sum_{\ell=0}^{d-1} a_\ell^R p_{\ell}^{R,e}(x)+ \sum_{\ell=0}^{d-1} a_\ell^L p_{\ell}^{L,e}(x),\quad x\in [1, b]
\end{align}
where $p_\ell^{R, e}$ (resp. $p_\ell^{L, e}$) is constructed such that it blends to zero while approaching $x=b$ (resp. $x=1$). As there could be several ways to construct these blending continuations~\cite{amlani2016fc, lyon2011fast, Nainwal_Anand_ModFC_2025}, in this paper we introduce a strategy that utilizes the fact that $p_\ell^R$ (resp. $p_\ell^L$), being a polynomial, is known on $[1,b]$. Multiplying this polynomial by a smooth cutoff function $\eta_\ell^R$ (resp.\ $\eta_\ell^L$) that vanishes as $x$ approaches $b$ (resp.\ 1) then yields an extension $p_\ell^{R,e}$ (resp.\ $p_\ell^{L,e}$) that also vanishes smoothly as the argument approaches $b$ (resp.\ 1). In other words, we define
\begin{align}\label{eq:blendingCont}
 p_\ell^{R,e}(x) := p_\ell^R(x) \eta_\ell^R(x), \quad p_\ell^{L,e}(x) := p_\ell^L(x) \eta_\ell^L(x), \quad 0\leq \ell\leq d-1,
\end{align}
for $x\in [1, b]$. Moreover, for given $d \in \mathbb{N}$, we require $\eta_\ell^{L,R} \in C^{d-1}([1,b])$ that is piecewise $C^{d+2}$, with $(\eta_\ell^{L,R})^{(d+2)} \in L^1([1,b])$. In addition, for given $\ell = 0, \ldots, d-1$, we require $\eta_\ell^{L,R}$
to satisfy, for $m = 0, \ldots, d-1$,
\begin{equation}\label{assump:eta}
 (\eta_\ell^R)^{(m)}(1) = \delta_{m0}, \quad (\eta_\ell^R)^{(m)}(b) = 0,
 \qquad
 (\eta_\ell^L)^{(m)}(b) = \delta_{m0}, \quad (\eta_\ell^L)^{(m)}(1) = 0.
\end{equation}

 The collection $(\eta_\ell^{L,R})_{\ell=0}^{d-1}$, will be referred to as a \emph{family of shape functions}, since they determine the shape of the blending continuation of Gram polynomials. By construction, these blending continuations inherit the regularity properties of the cutoff functions. Specifically, for $0\le k\le d-1$, we have
\begin{equation}\label{eq:PropertiesOfBlendingCont}
\begin{aligned}
 (p_\ell^{R,e})^{(k)}(1) &= (2/\delta)^k p_\ell^{(k)}(1),\quad (p_\ell^{R,e})^{(k)}(b) = 0,\\
 (p_\ell^{L,e})^{(k)}(b) &= (2/\delta)^k p_\ell^{(k)}(-1),\quad (p_\ell^{L,e})^{(k)}(1) = 0.
\end{aligned}
\end{equation}
For simplicity of notation, we define
\begin{align*}
f^L(y)&:=f((y+1)\delta/2), \qquad
f^R(y):=f(1+(y-1)\delta/2),
\end{align*}
whenever $y\in[-1,1]$.

\subsection{Approximation}
Write $b = 1 + (c+1)h$ with $c := (b-1)n - 1$, so that $c$ equispaced values outside $[0,\,1]$ are required to render $f^e$ periodic; since $b$ is kept fixed, $c = (b-1)n - 1 \sim n$. Once the extension $e[p](f)$ is constructed, we obtain periodic discrete data on an extended grid. Let $\{x_j\}_{j=0}^{n+c}$ denote the equispaced grid on $[0,b)$ given by
\[
x_j=\frac{jb}{n+c+1}, \qquad j=0,\dots,n+c .
\]
The data associated with the periodic extension are
\begin{equation}
(e[p](f))_j =
\begin{cases}
f_j, & j=0,\dots,n,\\
p_j, & j=n+1,\dots,n+c.
\end{cases}
\end{equation}
The trigonometric polynomial that interpolates $e[p](f)$ on this extended grid is
\begin{align}\label{eq:tnc}
\left(t_{n,c}(e[p]f)\right)(x)
=
\sum_{j=0}^{n+c} (e[p]f)_j L_j^{(n+c)}(x),
\qquad 0\le x\le b .
\end{align}
The Lagrange basis, $L_j^{(n+c)}$ are given by
\begin{align*}
L_j^{(n+c)}(x)=
\begin{cases}
1, & x=x_j,\\[6pt]
\dfrac{1}{n+c+1}
\left(
\sin(\pi(j-nx))
\cot\left(\dfrac{\pi(j-nx)}{n+c+1}\right)
\right), & x\ne x_j,
\end{cases}
\end{align*}
if $n+c$ is odd, and
\begin{align*}
L_j^{(n+c)}(x)=
\begin{cases}
1, & x=x_j,\\[6pt]
\dfrac{1}{n+c+1}
\left(
\sin(\pi(j-nx))
\csc\left(\dfrac{\pi(j-nx)}{n+c+1}\right)
\right), & x\ne x_j.
\end{cases}
\end{align*}
if $n+c$ is even. The derivation of these expressions follows the procedure described in Section~11.3 of \cite{Linear_Integral_Eq_2014}.

\section{Convergence Analysis} \label{sec:errorAnalysis}
In this section we analyze the convergence properties of the generalized FC-Gram approximation method. Given $f$ on $[0,\,1]$, having constructed the approximation $t_{n,c}(e[p](f))$, we consider the approximation error
\[
\|f - t_{n,c}(e[p](f))\|_{\infty,[0,\,1]}.
\]
To analyze this error, we first introduce a suitable reference extension. Within this framework, we define
\begin{align}\label{p_ref_gen}
\left(e\left[p^{r,d,n}_{\eta,\text{ref}}\right]f\right)(x)
:=
\begin{cases}
f(x), & x\in [0,\,1],\\
p^{r,d,n}_{\eta,\text{ref}}(x), & x\in (1,b),
\end{cases}
\end{align}
where
\begin{equation}\label{pref}
\begin{aligned}
p^{r,d,n}_{\eta,\text{ref}}(x)
&=
\sum_{\ell = 0}^{\min(r,d-1)}
\left\langle P^T_{\min(r,d-1)}(f^L), p_{\ell}\right\rangle
p_{\ell}^{L,e}(x) \\
&\quad +
\sum_{\ell = 0}^{\min(r,d-1)}
\left\langle P^T_{\min(r,d-1)}(f^R), p_{\ell}\right\rangle
p_{\ell}^{R,e}(x),
\end{aligned}
\end{equation}
and $P^{T}_{k}(g)$ denotes the degree-$k$ Taylor polynomial of $g$. For notational convenience, we denote the continuation $p^{r,d,n}_{\eta,\text{ref}}$ and the associated extension defined in \eqref{p_ref_gen} by $p_{\text{ref}}$ and $e[p_{\text{ref}}]f$, respectively.

Next, we present a collection of lemmas that describe the relationship between the derivatives of $f$, the continuation $p$, and the reference continuation $p_{\text{ref}}$. Since we assume $f \in C^{r}([0,\,1])$, while the constructions of $p$ and $p_{\text{ref}}$ may involve an arbitrary number of Gram polynomials, these results make the resulting derivative relationships explicit.
\begin{lemma}\label{lemma:pref}
Let $r, \,d\in\mathbb{N}$, $f\in C^{r}([0,\,1])$, and $m=\min(r,d-1)$. For every
$k\ge 0$,
\[
 p_{\text{ref}}^{(k)}(1^+)
 =\begin{cases} f^{(k)}(1),&0\le k\le m,\\0,&m<k\leq d-1,\end{cases}
 \qquad
 p_{\text{ref}}^{(k)}(b^-)
 =\begin{cases} f^{(k)}(0),&0\le k\le m,\\0,&m<k\leq d-1.\end{cases}
\]
\end{lemma}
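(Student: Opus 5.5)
The plan is to treat the right endpoint $x=1^+$ in detail; the left endpoint $x=b^-$ is symmetric, with $\varphi_L$ in place of $\varphi_R$ and $f^L$ in place of $f^R$.

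\textbf{Step 1: the left-blending terms vanish at $1$.} By \eqref{eq:PropertiesOfBlendingCont}, $(p_\ell^{L,e})^{(k)}(1)=0$ for $0\le k\le d-1$. Hence only the right sum in \eqref{pref} contributes:
\[
p_{\text{ref}}^{(k)}(1^+)=\sum_{\ell=0}^{m}\bigl\langle P^T_m(f^R),p_\ell\bigr\rangle\,(p_\ell^{R,e})^{(k)}(1).
\]

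\textbf{Step 2: reduce to a single polynomial and its Gram expansion.} Use the Leibniz rule on $p_\ell^{R,e}=p_\ell^R\eta_\ell^R$. By \eqref{assump:eta}, $(\eta_\ell^R)^{(j)}(1)=\delta_{j0}$ for $j\le d-1$. Therefore, for $k\le d-1$,
\[
(p_\ell^{R,e})^{(k)}(1)=(p_\ell^R)^{(k)}(1).
\]
This identity matters even though the $\eta_\ell^R$ differ with $\ell$: it lets us recombine the sum before differentiating. Set $Q:=\sum_{\ell=0}^m \langle P^T_m(f^R),p_\ell\rangle p_\ell$. Then $p_{\text{ref}}^{(k)}(1^+)=\frac{d^k}{dx^k}\,(Q\circ\varphi_R)(x)\big|_{x=1}$.

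Now $P^T_m(f^R)$ is a polynomial of degree $m\le d-1$. The $p_\ell$, $\ell\le m$, are orthonormal for the discrete inner product, and this inner product is positive definite on $\mathcal P_{d-1}$ because there are $d$ distinct nodes. So $\{p_0,\dots,p_m\}$ is an orthonormal basis of $\mathcal P_m$, and the truncated Gram expansion reproduces $P^T_m(f^R)$ exactly: $Q=P^T_m(f^R)$. This exactness is the crux of the argument. It is also why the sum is cut off at $m$ rather than running to $d-1$.

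\textbf{Step 3: evaluate the derivatives at $1$.} Recall $f^R(y)=f(1+(y-1)\delta/2)$ and $\varphi_R(x)=2(x-1)/\delta+1$, so $f^R\circ\varphi_R=f$ near $1$. Here the Taylor polynomial is taken about $y=1$, which corresponds to $x=1$. Since taking Taylor polynomials commutes with affine changes of variable, $P^T_m(f^R)\circ\varphi_R$ is the degree-$m$ Taylor polynomial of $f$ at $x=1$.

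\textbf{Step 4: read off both cases.} For $k\le m$, the $k$th derivative at $1$ of this Taylor polynomial is $f^{(k)}(1)$; this is well defined because $m\le r$ and $f\in C^r$. For $m<k\le d-1$, the $k$th derivative is $0$, since the polynomial has degree $m$.

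\textbf{Left endpoint.} Here the Taylor polynomial of $f^L$ is taken about $y=-1$, which corresponds to $x=0$ and, by periodicity, to $x=b$. Step 2 now uses $(\eta_\ell^L)^{(j)}(b)=\delta_{j0}$. Step 1 uses that the $R$-terms vanish at $b$ to order $d-1$.

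**Expected obstacle.** The only delicate point is the bookkeeping of which expansion point the Taylor polynomial refers to. Beyond that, one must justify that the $\ell$-dependent cutoffs do not spoil the recombination in Step 2; this is exactly what their flatness of order $d-1$ at the endpoint guarantees.
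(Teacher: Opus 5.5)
Your proposal is correct and follows essentially the same route as the paper. Both arguments discard the opposite-side blending terms, reduce $(p_\ell^{R,e})^{(k)}(1)$ to $(p_\ell^{R})^{(k)}(1)$ using the flatness of $\eta_\ell^R$, recombine the sum through the exact Gram expansion of the degree-$m$ Taylor polynomial, and read off the derivatives for $k\le m$ and $m<k\le d-1$. The differences are cosmetic: you compose with $\varphi_R$ instead of tracking the factors $(2/\delta)^k$ through the chain rule, and you justify exactness of the truncated Gram expansion via positive-definiteness of the discrete inner product on $\mathcal{P}_{d-1}$, which the paper takes for granted.
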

\begin{proof}
We prove the identities at $1^+$; those at $b^-$ follow by the same argument using $f^L$ and $p_\ell^{L,e}$ in place of $f^R$ and $p_\ell^{R,e}$. Set $q_R := P^T_m(f^R)$, the degree-$m$ Taylor polynomial of
$f^R(y) = f\left(1 + (y-1)\delta/2\right)$ at $y = 1$. Since $q_R \in \mathcal{P}_m$, it admits the exact Gram expansion
\[
 q_R(y) = \sum_{\ell=0}^{m} \langle q_R, p_\ell \rangle\, p_\ell(y),
\]
and from the definition of $p_{\text{ref}}$ we may write
\[
 p_{\text{ref}}(x)
 = \sum_{\ell=0}^{m} \langle q_R, p_\ell \rangle\, p_\ell^{R,e}(x)
 + \sum_{\ell=0}^{m} \langle q_L, p_\ell \rangle\, p_\ell^{L,e}(x).
\]
Since $m \le d-1$, the boundary conditions \eqref{eq:PropertiesOfBlendingCont} give
\begin{equation*}
 (p_\ell^{L,e})^{(k)}(1^+) = 0 \quad \text{and} \quad
(p_\ell^{R,e})^{(k)}(1^+) = (2/\delta)^k p_\ell^{(k)}(1) 
\end{equation*}

for all $k \le m$, so that
\begin{equation}\label{eq:pref_at_1}
 p_{\text{ref}}^{(k)}(1^+)
 = \left(\frac{2}{\delta}\right)^{k}\sum_{\ell=0}^{m} \langle q_R, p_\ell \rangle\, p_\ell^{(k)}(1)
 = \left(\frac{2}{\delta}\right)^{k} q_R^{(k)}(1),
 \qquad 0 \le k \le m,
\end{equation}
where the second equality uses the Gram expansion of $q_R$. The Taylor polynomial identity $q_R^{(k)}(1) = (f^R)^{(k)}(1)$ and the chain rule $(f^R)^{(k)}(1) = \left(\delta/2\right)^{k} f^{(k)}(1)$ combine with \eqref{eq:pref_at_1} to give $p_{\text{ref}}^{(k)}(1^+) = f^{(k)}(1)$ for $0 \le k \le m$. For $k > m$, every polynomial $p_\ell$ in the sum has $\deg p_\ell = \ell \le m < k$, so $p_\ell^{(k)} \equiv 0$ and hence
$p_{\text{ref}}^{(k)}(1^+) = 0$ for $m < k \le d-1$, completing the proof.
\end{proof}
The next lemma describes the relationship between the derivatives of $p$ and those of $f$.
\begin{lemma}\label{lemma:p}
Let $r, d \in \mathbb{N}$, $f \in C^{r}([0,\,1])$, and
$m = \min(r,\,d-1)$. For every $k$ with $0 \le k \le m$,
\begin{align*}
 p^{(k)}(1^+)
 &= f^{(k)}(1)
 + \left(\frac{2}{\delta}\right)^{k}
 \sum_{\ell=k}^{d-1}
 \langle R_m(f^R),\, p_\ell \rangle\, p_\ell^{(k)}(1),
 \\[6pt]
 p^{(k)}(b^-)
 &= f^{(k)}(0)
 + \left(\frac{2}{\delta}\right)^{k}
 \sum_{\ell=k}^{d-1}
 \langle R_m(f^L),\, p_\ell \rangle\, p_\ell^{(k)}(-1),
\end{align*}
where $R_m(f^L)$ and $R_m(f^R)$ are the degree-$m$ Taylor remainders of $f^L$ and $f^R$ at $y=-1$ and $y=1$ respectively.
\end{lemma}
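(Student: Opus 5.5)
The approach is to split the actual continuation into the reference continuation plus a correction, and then read off the boundary derivatives using Lemma~\ref{lemma:pref} and the boundary relations \eqref{eq:PropertiesOfBlendingCont}. I treat the identity at $1^+$; the one at $b^-$ follows symmetrically, with $f^L$, $p_\ell^{L,e}$ and the evaluation point $y=-1$.

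First, decompose the right boundary data. Write $f^R = q_R + R_m(f^R)$ with $q_R = P^T_m(f^R)$. Since $q_R\in\mathcal{P}_m$ and the Gram polynomials are orthonormal, $\langle q_R,p_\ell\rangle = 0$ for $\ell>m$. Linearity of the inner product then gives $a_\ell^R = \langle q_R,p_\ell\rangle + \langle R_m(f^R),p_\ell\rangle$ for all $0\le\ell\le d-1$, and the same splitting holds on the left. Substituting into \eqref{p} yields
\[
p(x) = p_{\text{ref}}(x) + \sum_{\ell=0}^{d-1}\langle R_m(f^R),p_\ell\rangle\,p_\ell^{R,e}(x) + \sum_{\ell=0}^{d-1}\langle R_m(f^L),p_\ell\rangle\,p_\ell^{L,e}(x).
\]
Here the terms $\ell\le m$ of the $q$-parts reproduce exactly \eqref{pref}.

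Next, differentiate $k$ times and evaluate at $1^+$, for $0\le k\le m\le d-1$. Lemma~\ref{lemma:pref} gives $p_{\text{ref}}^{(k)}(1^+) = f^{(k)}(1)$. By \eqref{eq:PropertiesOfBlendingCont}, which is valid because $k\le d-1$, the left sum contributes nothing, since $(p_\ell^{L,e})^{(k)}(1)=0$. For the right sum we have $(p_\ell^{R,e})^{(k)}(1) = (2/\delta)^k p_\ell^{(k)}(1)$. Finally, because $\deg p_\ell=\ell$, the terms with $\ell<k$ vanish, so the sum starts at $\ell=k$. This produces exactly the stated formula.

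There is no real obstacle here, only bookkeeping. The one point to check carefully is that \eqref{eq:PropertiesOfBlendingCont} applies to all $d$ blending continuations, not only those with $\ell\le m$; this holds because the conditions \eqref{assump:eta} are imposed for every $\ell$. The other point is that the Taylor remainder is defined, since $f\in C^r$ and $m\le r$. One must also note that $\langle R_m(f^R),p_\ell\rangle$ for $\ell\le m$ need not vanish, because the discrete inner product does not annihilate the remainder. This is why the correction sum runs over all $\ell$ from $k$ to $d-1$ rather than only over $\ell>m$.
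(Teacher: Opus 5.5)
Your proof is correct and rests on the same ingredients as the paper's: the Taylor splitting of the boundary data, exactness of the Gram expansion on $\mathcal{P}_m$, the endpoint relations \eqref{eq:PropertiesOfBlendingCont}, and $p_\ell^{(k)}\equiv 0$ for $\ell<k$. The only difference is organizational. You split $p$ itself as $p_{\text{ref}}$ plus the remainder-coefficient continuation and then cite Lemma~\ref{lemma:pref}, whereas the paper differentiates $p$ directly and redoes the Gram-exactness and chain-rule step inline; your function-level identity is the one the paper tacitly relies on later when it treats $p-p_{\text{ref}}$.
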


\begin{proof}
 We derive the formula at $b^-$; the identity at $1^+$ follows by the same argument with $f^R$ and $p_\ell^{R,e}$ in place of
$f^L$ and $p_\ell^{L,e}$.

By construction, $(p_\ell^{R,e})^{(k)}(b^-) = 0$
for all $\ell$, and $(p_\ell^{L,e})^{(k)}(b^-) = (2/\delta)^kp_\ell^{(k)}(-1)$ for $k \le d-1$.
Since $m \le d-1$, for every $0 \le k \le m$,
\begin{equation}\label{eq:p_bm}
 p^{(k)}(b^-)
 = \left(\frac{2}{\delta}\right)^{k} \sum_{\ell=0}^{d-1}
 \langle f^L,\,p_\ell \rangle\, p_\ell^{(k)}(-1).
\end{equation}
Substituting $f^L = P^T_m(f^L) + R_m(f^L)$ into \eqref{eq:p_bm}
and using exactness of the Gram expansion for $P^T_m(f^L)
\in \mathcal{P}_m$, the Taylor polynomial part contributes
\[
 \left(\frac{2}{\delta}\right)^{k}
 \left(P^T_m(f^L)\right)^{(k)}(-1)
 = \left(\frac{2}{\delta}\right)^{k}(f^L)^{(k)}(-1)
 = f^{(k)}(0),
\]
where we used $f^L(y) = f\left((y+1)\delta/2\right)$ so that the chain rule gives
$(f^L)^{(k)}(-1) = (\delta/2)^k f^{(k)}(0)$.
Since $\deg p_\ell = \ell < k$ implies $p_\ell^{(k)} \equiv 0$,
the terms $\ell = 0, \dots, k-1$ in the remainder part of
\eqref{eq:p_bm} vanish, and combining with the above yields the stated formula.
\end{proof}

\begin{lemma}\label{lemma:EstimateOnPkRk}
Let $k$ be a nonnegative integer and $f \in C^{k+1}([0,\,1])$.
For $0 \le \ell \le k$,
\begin{align*}
 \left|\langle P^T_k(f^L), p_\ell \rangle\right|
 &\le C_1^{k,\ell}\,\delta^\ell,
 \qquad
 \left|\langle P^T_k(f^R), p_\ell \rangle\right|
 \le C_2^{k,\ell}\,\delta^\ell,\\[4pt]
 \left|\langle R^T_k(f^L), p_\ell \rangle\right|
 &\le D_1^{k,\ell}\,\delta^{k+1},
 \qquad
 \left|\langle R^T_k(f^R), p_\ell \rangle\right|
 \le D_2^{k,\ell}\,\delta^{k+1},
\end{align*}
where
\begin{align*}
 C_1^{k,\ell}
 &= \sum_{m=0}^{k-\ell}
 \frac{|f^{(m+\ell)}(0)|\,
 \|(u+1)^{m+\ell}\|_{2,[-1,1]}}
 {2^{m+\ell}(m+\ell)!},
 &
 D_1^{k,\ell}
 &= \frac{2}{k!}\,
 \|f^{(k+1)}\|_{\infty,[0,\,1]}\,
 \|p_\ell\|_{\infty,[-1,1]},\\[4pt]
 C_2^{k,\ell}
 &= \sum_{m=0}^{k-\ell}
 \frac{|f^{(m+\ell)}(1)|\,
 \|(u-1)^{m+\ell}\|_{2,[-1,1]}}
 {2^{m+\ell}(m+\ell)!},
 &
 D_2^{k,\ell}
 &= \frac{2}{k!}\,
 \|f^{(k+1)}\|_{\infty,[0,\,1]}\,
 \|p_\ell\|_{\infty,[-1,1]}.
\end{align*}
In particular, $C_i^{k,\ell}$ and $D_i^{k,\ell}$ are independent
of $\delta$.
\end{lemma}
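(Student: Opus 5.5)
The plan is to expand the Taylor polynomials explicitly and use the orthogonality of the Gram polynomials to discard the low-degree terms. Throughout, $\|\cdot\|_{2,[-1,1]}$ is read as the norm induced by the discrete inner product $\langle\cdot,\cdot\rangle$, and $\delta=(d-1)/n\le 1$, i.e.\ $n\ge d-1$.

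\textbf{Taylor part.} By the chain rule $(f^L)^{(j)}(-1)=(\delta/2)^j f^{(j)}(0)$, so
\[
P^T_k(f^L)(u)=\sum_{j=0}^{k}\frac{(\delta/2)^j f^{(j)}(0)}{j!}\,(u+1)^j .
\]
For $j<\ell$ the monomial $(u+1)^j$ lies in $\mathcal{P}_{\ell-1}=\mathrm{span}\{p_0,\dots,p_{\ell-1}\}$. It is therefore orthogonal to $p_\ell$ in $\langle\cdot,\cdot\rangle$, and only the indices $j=m+\ell$ with $0\le m\le k-\ell$ survive. For each surviving term, Cauchy--Schwarz together with $\langle p_\ell,p_\ell\rangle=1$ gives
\[
|\langle (u+1)^{m+\ell},p_\ell\rangle|\le \|(u+1)^{m+\ell}\|_{2,[-1,1]}.
\]
Since $\delta\le 1$, we have $\delta^{m+\ell}\le\delta^\ell$. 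Summing over $m$ yields exactly $|\langle P^T_k(f^L),p_\ell\rangle|\le C_1^{k,\ell}\delta^\ell$. The right endpoint is identical: expand $f^R$ at $u=1$ with $(u-1)^j$ and use $(f^R)^{(j)}(1)=(\delta/2)^j f^{(j)}(1)$ to obtain $C_2^{k,\ell}$.

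\textbf{Remainder part.} Since $f\in C^{k+1}$, write $R^T_k(f^L)$ in Lagrange or integral form. We have $\|(f^L)^{(k+1)}\|_\infty=(\delta/2)^{k+1}\|f^{(k+1)}\|_{\infty,[0,1]}$ and $|u+1|\le 2$ on $[-1,1]$. This gives the pointwise bound
\[
|R^T_k(f^L)(y_j)|\le \frac{\delta^{k+1}\|f^{(k+1)}\|_{\infty,[0,1]}}{(k+1)!}.
\]
Bounding $|p_\ell(y_j)|\le\|p_\ell\|_{\infty,[-1,1]}$ and summing over the $d$ nodes then gives
\[
|\langle R^T_k(f^L),p_\ell\rangle|\le \frac{d}{(k+1)!}\,\delta^{k+1}\|f^{(k+1)}\|_{\infty,[0,1]}\,\|p_\ell\|_{\infty,[-1,1]}.
\]
This already establishes the $\delta^{k+1}$ scaling with a constant independent of $\delta$, because $d$ is fixed. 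The right endpoint is again symmetric.

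\textbf{Main obstacle.} The hard step is not the $\delta$-scaling but matching the literal constant $D_i^{k,\ell}=\tfrac{2}{k!}\|f^{(k+1)}\|_\infty\|p_\ell\|_\infty$. The node-count factor $d/(k+1)$ has to be traded for the factor $2$. I would try to do this by using the integral form of the remainder, $\tfrac{1}{k!}\int(\cdot)^k$, and replacing the node sum by a bound comparable to an integral over an interval of length $2$. If that comparison does not hold exactly for the discrete sum, I would keep the constant $\tfrac{d}{(k+1)!}\|f^{(k+1)}\|_\infty\|p_\ell\|_\infty$ instead. That constant is still independent of $\delta$, which is all that the later convergence analysis uses.
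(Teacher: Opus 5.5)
Your proposal is correct. Where it departs from the paper, your version is the one that actually holds. The Taylor part is the paper's argument: expand, use orthogonality to remove the terms $j<\ell$, apply Cauchy--Schwarz, and absorb $\delta^m\le 1$ (the paper also silently needs $n\ge d-1$). The only difference is the norm. The paper reads $\|\cdot\|_{2,[-1,1]}$ as the continuous $L^2$ norm and appeals to a discrete/continuous norm-equivalence factor depending on $d$, but that factor does not appear in the stated $C_1^{k,\ell}$. Reading the norm as the discrete one, as you do, is what makes the displayed constant literally true. With the continuous norm it fails: for $f(x)=x$ and $k=\ell=1$ one gets $|\langle P^T_1(f^L),p_1\rangle|=\tfrac{\delta}{2}\sqrt{d(d+1)/(3(d-1))}$, which exceeds $C_1^{1,1}\delta=\tfrac{\delta}{2}\sqrt{8/3}$ once $d\ge 6$.

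For the remainder the two routes differ. The paper writes the remainder in integral form and applies H\"older to bound $|\langle R^T_k(f^L),p_\ell\rangle|$ by $\|R^T_k(f^L)\|_{1,[-1,1]}\|p_\ell\|_{\infty,[-1,1]}$. It then evaluates the $L^1$ norm by Fubini to reach $\tfrac{2}{k!}\delta^{k+1}\|f^{(k+1)}\|_{\infty}$. That H\"older step treats the discrete inner product $\langle\cdot,\cdot\rangle$ as an integral over $[-1,1]$, which it is not. The factor $2$ is the Lebesgue measure of $[-1,1]$, whereas the counting measure on the $d$ nodes has mass $d$. Your pointwise Lagrange bound plus node count handles the discrete sum correctly.

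The ``obstacle'' you flag cannot be overcome. Take $f(x)=x$ and $k=\ell=0$. Then $R^T_0(f^L)(y)=(y+1)\delta/2$ and $\langle R^T_0(f^L),p_0\rangle=\sqrt{d}\,\delta/2$, while $D_1^{0,0}\delta=2\delta/\sqrt{d}$. So the stated constant is false for every $d\ge 5$, including the $d=5$ used in the experiments.

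Drop the attempt to trade $d/(k+1)$ for $2$ and keep your constant $\tfrac{d}{(k+1)!}\|f^{(k+1)}\|_{\infty,[0,1]}\|p_\ell\|_{\infty,[-1,1]}$. Lemma~\ref{lemma:sufficientCondEta} and Theorem~\ref{thm:main} only use that these constants are independent of $\delta$, so nothing downstream changes. One small slip: $\|(f^L)^{(k+1)}\|_{\infty,[-1,1]}$ is only $\le (\delta/2)^{k+1}\|f^{(k+1)}\|_{\infty,[0,1]}$, not equal to it.
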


\begin{proof}
We prove the bounds for $f^L$; those for $f^R$ follow by the same
argument. The Taylor polynomial of $f^L(y) = f\left((y+1)\delta/2\right)$
about $y = -1$ gives
\[
 \langle P^T_k(f^L), p_\ell \rangle
 = \sum_{m=0}^{k}
 \frac{f^{(m)}(0)}{m!}
 \left(\frac{\delta}{2}\right)^{m}
 \langle (y+1)^m, p_\ell \rangle.
\]
Since $p_\ell \perp \mathcal{P}_{\ell-1}$, the terms $m < \ell$ vanish.
Factoring out $\delta^\ell$ and applying Cauchy--Schwarz to the remaining inner products gives $|\langle P^T_k(f^L), p_\ell \rangle| \le C_1^{k,\ell}\delta^\ell$,
where the continuous $L^2([-1,1])$-norm in $C_1^{k,\ell}$ arises from the
equivalence of the discrete and continuous norms on polynomials of bounded
degree, with a constant depending only on the fixed parameter $d$.

For the remainder, the integral representation
\[
 (R^T_kf^L)(y)
 = \frac{1}{k!}
 \int_0^{(y+1)\delta/2}
 \left((y+1)\delta/2 - t\right)^{k}
 f^{(k+1)}(t)\,dt
\]
and H\"{o}lder's inequality give
\[
 \left|\langle R^T_k(f^L), p_\ell \rangle\right|
 \le
 \|R^T_k(f^L)\|_{1,[-1,1]}\,\|p_\ell\|_{\infty,[-1,1]}.
\]
Applying Fubini's theorem to interchange the order of integration,
\[
 \|R^T_k(f^L)\|_{1,[-1,1]}
 \le \frac{1}{k!}
 \int_0^\delta |f^{(k+1)}(t)|
 \left(
 \int_{2t/\delta-1}^{1}
 \left((y+1)\delta/2 - t\right)^k dy
 \right) dt.
\]
The substitution $u = (y+1)\delta/2 - t$ converts the inner integral to
$\int_0^{\delta-t} u^k\,(\delta/2)^{-1}\,du \le 2\delta^k$,
and the stated bound
$|\langle R^T_k(f^L), p_\ell \rangle| \le D_1^{k,\ell}\delta^{k+1}$
follows.
\end{proof}

The next lemma will play a crucial role while proving the main theorem later. 

\begin{lemma}\label{lemma:sufficientCondEta}
Let $r, d \in \mathbb{N}$, $m = \min(r,d-1)$, $s = \min(r,d)$,
and let $p$ and $p_{\text{ref}}$ be constructed with $d$ Gram
polynomials and shape-function family $(\eta_\ell^L,\, \eta_\ell^R)_{\ell=0}^{d-1}$. Then
\begin{equation}\label{eq:Fourier_decay}
 \begin{aligned}
 \left|\int_1^b p_{\text{ref}}^{(s+1)}(x)\,
 e^{-2\pi i\ell x/b}\,dx\right| &= O(|\ell|^{-1}),
 \\
 \left|\int_1^b (p - p_{\text{ref}})^{(s+1)}(x)\,
 e^{-2\pi i\ell x/b}\,dx\right| &= O(|\ell|^{-1}).
\end{aligned} 
\end{equation}

\end{lemma}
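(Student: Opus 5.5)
One integration by parts on $[1,b]$ proves both estimates, once we know that the $(s+2)$nd derivatives of $p_{\text{ref}}$ and $p-p_{\text{ref}}$ are integrable there. Both functions are finite linear combinations of the blending continuations $p_\ell^{L,e}=p_\ell^L\eta_\ell^L$ and $p_\ell^{R,e}=p_\ell^R\eta_\ell^R$. Indeed, $p-p_{\text{ref}}=\sum_\ell \bigl(a_\ell^R-\tilde a_\ell^R\bigr)p_\ell^{R,e}+\sum_\ell \bigl(a_\ell^L-\tilde a_\ell^L\bigr)p_\ell^{L,e}$, where $\tilde a_\ell$ are the coefficients in \eqref{pref}, taken to be zero for $\ell>m$. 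For fixed $n$ the coefficients are finite numbers, so we only need regularity of each single product $g_\ell:=p_\ell^{R}\eta_\ell^R$ (and its left analogue).

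\textbf{Regularity step.} Since $s\le d$, we have $s+2\le d+2$. The factor $p_\ell^R$ is a polynomial, and $\eta_\ell^R\in C^{d-1}([1,b])$ is piecewise $C^{d+2}$ with $(\eta_\ell^R)^{(d+2)}\in L^1$. By Leibniz's rule, $g_\ell$ is piecewise $C^{d+2}$ with $g_\ell^{(d+2)}\in L^1$, so $g_\ell^{(j)}$ is absolutely continuous on each piece for $j\le d+1$. Globally, $g_\ell\in C^{d-1}([1,b])$, so $g_\ell^{(s+1)}$ may have jumps at the finitely many break points $\xi_1<\dots<\xi_N$ when $s+1\ge d$. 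I would therefore integrate by parts piece by piece:
\[
\int_{\xi_i}^{\xi_{i+1}} g^{(s+1)}(x)e^{-2\pi i\ell x/b}\,dx=\frac{b}{2\pi i\ell}\Bigl[g^{(s+1)}e^{-2\pi i\ell x/b}\Bigr]_{\xi_i^+}^{\xi_{i+1}^-}+\frac{b}{2\pi i\ell}\int_{\xi_i}^{\xi_{i+1}} g^{(s+2)}(x)e^{-2\pi i\ell x/b}\,dx.
\]
Summing over the pieces gives a bound of the form
\[
\frac{b}{2\pi|\ell|}\Bigl(2(N+1)\max_i\|g^{(s+1)}\|_{\infty,(\xi_i,\xi_{i+1})}+\|g^{(s+2)}\|_{1,[1,b]}\Bigr)=O(|\ell|^{-1}).
\]
This needs $s+2\le d+2$, which holds by the definition of $s$. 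This is where the hypothesis $(\eta_\ell)^{(d+2)}\in L^1$, rather than $(d+1)$, becomes relevant: it covers the worst case $s=d$.

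\textbf{Main obstacle.} The step needing care is the case $s=d$ (that is, $r\ge d$), where $s+1=d+1$ exceeds the global smoothness $C^{d-1}$. There the jump terms at the interior breakpoints must be kept explicitly. They are still bounded uniformly in $\ell$, since the one-sided limits exist by piecewise $C^{d+2}$ regularity, so the $O(|\ell|^{-1})$ rate survives. The implied constants depend on $n$ through $\delta$, via the factors $(2/\delta)^j$ in the derivatives of $p_\ell^R$ and through the coefficients. However, the lemma asserts only decay in $\ell$. Quantifying the $n$-dependence would use Lemma~\ref{lemma:EstimateOnPkRk} and is deferred to the main theorem.
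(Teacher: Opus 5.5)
Your piecewise integration by parts correctly shows that, for each \emph{fixed} $n$, both Fourier integrals are $O(|\ell|^{-1})$. It is in fact more careful than the paper's single integration by parts over $[1,b]$. That step tacitly needs $g^{(s+1)}$ to be absolutely continuous on all of $[1,b]$ when $s+1\ge d$, although $\eta_\ell^{L,R}$ is only assumed globally $C^{d-1}$.

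The gap is the step where you set aside the dependence on $n$. In the paper, the whole substance of this lemma is that the implied constants are independent of $n$ (equivalently of $\delta=(d-1)/n$). Its proof explicitly reduces both claims to bounding the endpoint values and the $L^1$ norm of the next derivative \emph{uniformly in $n$}. The proof of Theorem~\ref{thm:main} then cites the lemma to verify the $g_2$-part of hypothesis~\eqref{eq:Cond_On_g}, whose constant $M$ multiplies $n^{-(s+\gamma)}$ in~\eqref{eq:interp_bound}. An $M$ that grows like a power of $n$ destroys the rate. The theorem never revisits this point, so deferring it ``to the main theorem'' leaves it undone. For fixed $n$ the claim is nearly vacuous, since any function that is absolutely continuous on each of finitely many pieces has $O(|\ell|^{-1})$ Fourier integrals.

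The missing idea is a scaling balance; in what follows $\ell$ denotes the Gram index. Since $\varphi_R([1,b])=[1,\,1+2(b-1)/\delta]$, every derivative $(p_\ell^R)^{(j)}=(2/\delta)^j\,p_\ell^{(j)}\circ\varphi_R$ is $O(\delta^{-\ell})$ on $[1,b]$: each derivative gains $2/\delta$ but lowers the degree by one. By the Leibniz rule, both your jump terms and $\|(p_\ell^{R,e})^{(s+2)}\|_{1,[1,b]}$ are therefore $O(\delta^{-\ell})$ times $n$-independent norms of $\eta_\ell^R$. Lemma~\ref{lemma:EstimateOnPkRk} gives $\langle P^T_m(f^R),p_\ell\rangle=O(\delta^{\ell})$, so each term of $p_{\text{ref}}$ is bounded uniformly in $n$.

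For $p-p_{\text{ref}}$ the coefficients are $\langle R_m(f^{R}),p_\ell\rangle=O(\delta^{m+1})$, giving $O(\delta^{m+1-\ell})$, which closes only for $\ell\le m+1$. When $d-1>r$, the difference $p-p_{\text{ref}}$ also contains the terms $m<\ell\le d-1$. The paper's argument (``since $\ell\le m$'') does not address these, and for $\ell\ge m+2$ uniformity genuinely fails. For example, take $f\equiv 0$ near $0$ and $f=(1-x)^{r+1+\beta}$ near $1$, with $0<\beta<1$ and $d\ge r+3$. Then for large $n$ one has $p_{\text{ref}}\equiv 0$, and at suitable fixed frequencies the Fourier integrals of $p^{(r+1)}$ grow like $\delta^{\,r+2+\beta-d}$. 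So expect the uniform second estimate to require $d\le r+2$, with the remaining case of $E_2$ in Theorem~\ref{thm:main} needing a separate argument.
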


\begin{proof}
A single integration by parts on $[1,\,b]$ gives
\[
 \left|\int_1^b g(x)\,e^{-2\pi i\ell x/b}\,dx\right|
 \le
 \frac{b}{2\pi|\ell|}
 \left(|g(b^-)| + |g(1^+)| + \|g'\|_{1,[1,b]}\right).
\]
Applying this with $g = p_{\text{ref}}^{(s+1)}$ and $g = (p-p_{\text{ref}})^{(s+1)}$, both claims
in~\eqref{eq:Fourier_decay} reduce to showing that the boundary values and the $L^1$ norm of the next derivative are bounded uniformly in $n$. The boundary values of $p_{\text{ref}}^{(s+1)}$
at $1^+$ and $b^-$ follow from \eqref{eq:PropertiesOfBlendingCont} and Lemma~\ref{lemma:pref}, and those of
$(p-p_{\text{ref}})^{(s+1)}$ follow from Lemmas~\ref{lemma:pref} and~\ref{lemma:p}, so it suffices to bound
$\|p_{\text{ref}}^{(s+2)}\|_{1,[1,b]}$ and $\|(p-p_{\text{ref}})^{(s+2)}\|_{1,[1,b]}$ uniformly in $n$.

We begin with $p_\ell^{R,e} = p_\ell^R\eta_\ell^R$. The Leibniz rule gives
\begin{equation}\label{eq:leibniz_expand}
 \left\|(p_\ell^{R,e})^{(s+2)}\right\|_{1,[1,b]}
 \le
 \sum_{j=0}^{s+2}\binom{s+2}{j}
 \left\|(\eta_\ell^R)^{(s+2-j)}\right\|_{1,[1,b]}
 \left\|(p_\ell^R)^{(j)}\right\|_{1,[1,b]}.
\end{equation}
Since $\deg p_\ell = \ell$, every term with $j > \ell$ vanishes.
For the remaining terms, the chain rule applied to
$p_\ell^R = p_\ell \circ \varphi_R$ gives
$(p_\ell^R)^{(j)}(x) = (2/\delta)^j\,p_\ell^{(j)}(\varphi_R(x))$.
The substitution $\xi = \varphi_R(x)$, with Jacobian $dx/d\xi = \delta/2$,
gives
\begin{equation}\label{eq:L1_cov}
 \left\|(p_\ell^R)^{(j)}\right\|_{1,[1,b]}
 =
 \left(\frac{2}{\delta}\right)^{j-1}
 \left\|p_\ell^{(j)}\right\|_{1,\varphi_R([1,b])}.
\end{equation}
Since $p_\ell^{(j)}$ is a polynomial of degree $\ell-j$ with
coefficients depending only on $d$, writing $\xi = 1+t$ for
$t \in [0,2(b-1)/\delta]$ and integrating directly,
\begin{equation}\label{eq:poly_int}
 \left\|p_\ell^{(j)}\right\|_{1,\varphi_R([1,b])}
 \le
 \frac{C_{\ell,j}}{\ell-j+1}
 \left(\frac{2(b-1)}{\delta}\right)^{\ell-j+1},
\end{equation}
where $C_{\ell,j}$ depends only on $d$ and the Gram polynomials,
independent of $n$. Substituting~\eqref{eq:poly_int}
into~\eqref{eq:L1_cov} and collecting powers of $\delta$,
\[
 \left\|(p_\ell^R)^{(j)}\right\|_{1,[1,b]}
 \le
 A_{\ell,j}\left(\frac{2}{\delta}\right)^{\ell},
 \qquad
 A_{\ell,j} :=
 \frac{C_{\ell,j}(2(b-1))^{\ell-j+1}}{(\ell-j+1)\,2^{\ell-j+1}},
\]
with $A_{\ell,j}$ independent of $n$. Inserting
into~\eqref{eq:leibniz_expand},
\begin{equation}\label{eq:single_piece_bound}
 \left\|(p_\ell^{R,e})^{(s+2)}\right\|_{1,[1,b]}
 \le
 \left(\frac{2}{\delta}\right)^{\ell}
 \sum_{j=0}^{\ell}\binom{s+2}{j}A_{\ell,j}
 \left\|(\eta_\ell^R)^{(s+2-j)}\right\|_{1,[1,b]},
\end{equation}
and the same bound holds for $p_\ell^{L,e}$ with constants $A_{\ell,j}^L$ of the same form.

To bound $\|p_{\text{ref}}^{(s+2)}\|_{1,[1,b]}$, recall from~\eqref{pref} that
\[
p_{\text{ref}} = \sum_{\ell=0}^{m}\langle P^T_m(f^L),p_\ell\rangle
p_\ell^{L,e} + \sum_{\ell=0}^{m}\langle P^T_m(f^R),p_\ell\rangle
p_\ell^{R,e}.
\]
The triangle inequality and~\eqref{eq:single_piece_bound}, utilizing the estimates obtained in Lemma~\ref{lemma:EstimateOnPkRk} yield the following
\begin{align*}
 \|p_{\text{ref}}^{(s+2)}\|_{1,[1,b]} &\leq \sum_{\ell=0}^{m}2^\ell C_1^{m, \ell}\left(\sum_{j=0}^{\ell}\binom{s+2}{j} A_{\ell,j}^L\left\|(\eta_\ell^L)^{(s+2-j)}\right\|_{1,[1,b]}\right)\\
 &+ \sum_{\ell=0}^{m}2^\ell C_2^{m, \ell}\left(\sum_{j=0}^{\ell}\binom{s+2}{j} A_{\ell,j}^R\left\|(\eta_\ell^R)^{(s+2-j)}\right\|_{1,[1,b]}\right)
\end{align*}
 Since $b$, $d$, the constants $C_i^{m,\ell}$, $A_{\ell,j}$, and the $L^1$ norms of the shape function derivatives over $[1,\,b]$ are all independent of $n$, the right-hand side is bounded uniformly in $n$.

The bound for $\|(p-p_{\mathrm{ref}})^{(s+2)}\|_{1,[1,b]}$ follows
by the same argument, with the Taylor polynomial coefficients
$\langle P^T_m(f^{L,R}),p_\ell\rangle$ replaced by the remainder
coefficients $\langle R_m(f^{L,R}),p_\ell\rangle$.
By Lemma~\ref{lemma:EstimateOnPkRk},
$|\langle R_m(f^R),p_\ell\rangle| \le D_2^{m,\ell}\delta^{m+1}$,
and analogously for $f^L$. Since $\ell \le m$, each coefficient
$D_i^{m,\ell}\delta^{m+1}$ contributes a factor $\delta^{m+1-\ell}\ge\delta>0$
after cancellation with $(2/\delta)^\ell$, which remains bounded
for all $n$. Hence the right-hand side is bounded uniformly in $n$, completing the proof.
\end{proof}

Having established the required auxiliary results, we turn to
estimating the approximation error. For $x \in [0, b]$, the error satisfies
\begin{align*}
 \left(e[p]f\right)(x) - \left(t_{n,c}(e[p]f)\right)(x)
 &= \left(e[p_{\text{ref}}]f\right)(x)
 - \left(t_{n,c}(e[p_{\text{ref}}]f)\right)(x)\\
 &\quad
 + \left(e[p]f - e[p_{\text{ref}}]f\right)(x)
 - \left(t_{n,c}(e[p]f - e[p_{\text{ref}}]f)\right)(x).
\end{align*}
Restricting to $x \in [0,\,1]$, where $(e[p]f)(x) = f(x)$, this decomposition gives
\begin{equation}\label{eq:error_decomp}
 f(x) - \left(t_{n,c}(e[p]f)\right)(x) = E_1(x) + E_2(x),
\end{equation}
where
\begin{equation}\label{eq:E1E2}
\begin{aligned}
 E_1(x) &:= \left(e[p_{\text{ref}}]f\right)(x)
 - \left(t_{n,c}(e[p_{\text{ref}}]f)\right)(x),\\
 E_2(x) &:= \left(e[p]f - e[p_{\text{ref}}]f\right)(x)
 - \left(t_{n,c}(e[p]f - e[p_{\text{ref}}]f)\right)(x).
\end{aligned}
\end{equation}
Each term has the form $g(x) - (\tau^b_n g)(x)$ for a suitable
$b$-periodic function $g$, where $\tau^b_n g$ denotes the trigonometric
polynomial of the form \eqref{eq:tnc} that interpolates $g$ on the equispaced
grid $\{x_j\}_{j=0}^{n+c}$. A bound on $\|\tau^b_n g - g\|_{\infty,[0,\,1]}$
in terms of the smoothness of $g$ and its Fourier-coefficient decay
therefore directly controls both $\|E_1\|_{\infty,[0,\,1]}$ and
$\|E_2\|_{\infty,[0,\,1]}$. The following lemma provides such a bound.

\begin{lemma}\label{GenFC:lemma_interp_error}
Let $b \in \mathbb{Q}$ with $b > 1$, let $s \in \mathbb{N} \cup \{0\}$,
and let $g$ be a $b$-periodic function of the form
\[
 g(x) =
 \begin{cases}
 g_1(x), & x \in [0,\,1],\\
 g_2(x), & x \in (1,b),
 \end{cases}
\]
where $g_1 \in C^s([0,\,1]) \cap C^{s+1}(0,1)$ and $g_2 \in C^s([1,b]) \cap C^{s+1}(1,b)$.
Suppose that for some $0 \le \gamma \le 1$ with $s + \gamma > 0$, and some $\ell_0 > 0$ and $M > 0$,
\begin{equation}\label{eq:Cond_On_g}
 \left|\int_0^1 g_1^{(s+1)}(x)\,e^{-2\pi i\ell x/b}\,dx\right|
 + \left|\int_1^b g_2^{(s+1)}(x)\,e^{-2\pi i\ell x/b}\,dx\right|
 \le M|\ell|^{-\gamma},
 \quad |\ell| > \ell_0.
\end{equation}
Then for all $n \ge 2\ell_0/b$,
\begin{equation}\label{eq:interp_bound}
 \|\tau^b_n g - g\|_{\infty,[0,\,1]}
 \le \sum_{k=1}^{s}
 \frac{4J_k\,\zeta(k+1)}{\pi^{k+1}}\,n^{-k}
 + \frac{4M\,\zeta(s+1+\gamma)}{\pi^{s+1+\gamma}}\,n^{-(s+\gamma)},
\end{equation}
where $\zeta(\,\cdot\,)$ denotes the Riemann zeta function, and
\begin{equation}\label{eq:J_k}
 J_k = \left|g_1^{(k)}(0^+) - g_2^{(k)}(b^-)\right|
 + \left|g_2^{(k)}(1^+) - g_1^{(k)}(1^-)\right|,
 \quad 1 \le k \le s.
\end{equation}
\end{lemma}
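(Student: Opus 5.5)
We will use the standard aliasing representation of the trigonometric interpolation error in terms of the Fourier coefficients of $g$. Set $N:=n+c+1$, the number of grid points in one period. Since $b=1+(c+1)/n$, we have $N=nb$, and $b\in\mathbb{Q}$ lets us pick $n$ so that $N$ is an integer. Write $\hat g(\ell)=\frac1b\int_0^b g(x)e^{-2\pi i\ell x/b}\,dx$. Because $g$ is piecewise $C^{s+1}$ and its $(s+1)$st derivative on each piece is controlled by \eqref{eq:Cond_On_g}, the Fourier series of $g$ converges absolutely whenever $s+\gamma>0$. The interpolant $\tau_n^b g$ then has coefficients $\sum_{q}\hat g(\ell+qN)$ for indices $\ell$ in the symmetric band $\mathcal I_N$ of width $N$. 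This gives the pointwise bound
\[
|\tau_n^b g(x)-g(x)|\le 2\sum_{|\ell|> N/2-1}|\hat g(\ell)|,
\]
where the factor $2$ accounts for the truncated tail and the aliased tail, each of which is bounded by the same sum. The constants $4/\pi^{k+1}$ in \eqref{eq:interp_bound} are consistent with this: $2\cdot 2$ counts $\pm\ell$ together with the factor above, and the $\pi^{-(k+1)}$ comes from $(2\pi|\ell|/b)^{-(k+1)}$ with $|\ell|\gtrsim N/2=nb/2$.

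The key step is to expand $\hat g(\ell)$ by repeated integration by parts, done separately on $[0,1]$ and on $[1,b]$. For $\ell\neq 0$, integrating by parts $s+1$ times produces boundary terms at $0^+,1^\mp,b^-$. Since $e^{-2\pi i\ell x/b}$ is $b$-periodic, the contributions at $0$ and $b$ combine into the jumps $g_1^{(k)}(0^+)-g_2^{(k)}(b^-)$, and those at $1$ combine into $g_2^{(k)}(1^+)-g_1^{(k)}(1^-)$. This yields
\[
\hat g(\ell)=\sum_{k=0}^{s}\frac{(\text{jump terms at order }k)}{b\,(2\pi i\ell/b)^{k+1}}+\frac{1}{b\,(2\pi i\ell/b)^{s+1}}\Bigl(\int_0^1 g_1^{(s+1)}e^{-2\pi i\ell x/b}\,dx+\int_1^b g_2^{(s+1)}e^{-2\pi i\ell x/b}\,dx\Bigr).
\]
The $k=0$ jumps vanish because $g$ is continuous, and the order-$k$ jump terms are bounded in modulus by $J_k$. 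The remainder is bounded by $M|\ell|^{-\gamma}$ via \eqref{eq:Cond_On_g}, valid for $|\ell|>\ell_0$; the hypothesis $n\ge 2\ell_0/b$ guarantees $N/2=nb/2\ge\ell_0$, so every tail index satisfies this. Hence
\[
|\hat g(\ell)|\le\sum_{k=1}^s \frac{J_k\,b^{k}}{(2\pi|\ell|)^{k+1}}+\frac{M\,b^{s}}{(2\pi)^{s+1}|\ell|^{s+1+\gamma}}.
\]

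It remains to sum the tails $\sum_{|\ell|\ge N/2}|\ell|^{-(k+1)}$. We write tail indices as $\ell=(N/2)j$-scaled blocks, or more directly compare each tail with a zeta sum after factoring out $(N/2)^{-(k+1)}=(nb/2)^{-(k+1)}$. The resulting powers of $b$ cancel, and $n^{-k}$ appears after using $N=nb$, one factor of $N$ being absorbed by the count of terms in the tail. \textbf{This is the main obstacle:} the constants must come out to exactly $\zeta(k+1)$ with no extra powers of $b$. A crude comparison with a zeta tail loses either a factor $N$ or the precise constant. We will try the classical alias-pairing bound, which groups aliased frequencies $\ell+qN$ with $\ell\in\mathcal I_N$ and sums over $q\ge1$ to get exactly $\zeta(k+1)(N/\pi)^{-(k+1)}$-type terms. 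If that fails, we will accept a $b$-dependent constant and state the bound as an $O(n^{-k})$ estimate in the same form. The $\gamma$-term is handled identically, giving $n^{-(s+\gamma)}$ with $\zeta(s+1+\gamma)$; this converges because $s+\gamma>0$.
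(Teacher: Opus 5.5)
Your argument is the paper's: integrate by parts $s+1$ times on $[0,1]$ and $[1,b]$ separately, so that $c_\ell(g)$ is expressed through jump terms bounded by $J_k$ plus the remainder integrals, then bound the interpolation error by twice the aliased tail and sum that tail by writing the indices as $\ell\pm qN$ with $\ell\in\mathcal{I}_N$, $q\ge 1$. The step you flag as the main obstacle closes exactly as you hope, because $|\ell\pm qN|\ge N(2q-1)/2$ uniformly over the $N=nb$ indices of each block, so the $k$-th jump term contributes at most $4N\frac{b^kJ_k}{(2\pi)^{k+1}}\sum_{q\ge1}\left(\frac{2}{N(2q-1)}\right)^{k+1}\le\frac{4J_k\zeta(k+1)}{\pi^{k+1}}n^{-k}$ with every power of $b$ cancelling (the block size $N$ is what turns $N^{-(k+1)}$ into $n^{-k}$); the same computation for the $M$-term gives $\frac{4M(2/b)^{\gamma}\zeta(s+1+\gamma)}{\pi^{s+1}}n^{-(s+\gamma)}$ rather than the stated $\pi^{-(s+1+\gamma)}$ constant, which the paper reaches only by writing $(b/2\pi)^{s+1+\gamma}$ where the hypothesis supplies $(b/2\pi)^{s+1}$, so your $b$-dependent fallback is what this argument actually gives for that term, with the rate $n^{-(s+\gamma)}$ unaffected.
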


\begin{proof}
 Given an integer $\ell$, let
\begin{align}\label{eq:c_ell_g}
 c_{\ell}(g):= \dfrac{1}{b}\int_0^b g(x)e^{-2\pi i \ell x/b} dx, 
\end{align}
denote the Fourier coefficient of $g$. Then, for $\ell \ne 0$,
integrating by parts $s$ times on each of
$[0,\,1]$ and $[1,\,b]$ separately and collecting the boundary
contributions at $x = 0^+$, $x = 1^\pm$, and $x = b^-$ gives
\begin{align}\label{eq:cl_g}
 c_\ell(g)
 &= \frac{1}{b}\sum_{k=1}^{s}
 \left(\frac{b}{2\pi i\ell}\right)^{k+1}
 \left[
 \left(g_1^{(k)}(0^+) - g_2^{(k)}(b^-)\right)
 + \left(g_2^{(k)}(1^+) - g_1^{(k)}(1^-)\right)
 e^{-2\pi i\ell/b}
 \right]
 \notag\\
 &\quad
 + \frac{1}{b}
 \left(\frac{b}{2\pi i\ell}\right)^{s+1}
 \left(
 \int_0^1 g_1^{(s+1)}(x)\,e^{-2\pi i\ell x/b}\,dx
 + \int_1^b g_2^{(s+1)}(x)\,e^{-2\pi i\ell x/b}\,dx
 \right).
\end{align}
The aliasing formula for trigonometric interpolation on $n+c+1$ equispaced nodes on $[0,b)$ gives
$g(x) - (\tau^b_n g)(x)
= \sum_{\ell}\sum_{m \ne 0} c_{\ell+nbm}(g)\,e^{2\pi i\ell x/b}$, so
\[
 |g(x) - (\tau^b_n g)(x)|
 \le 2\sum_{\ell=-nb/2}^{(nb/2)-1}
 \sum_{m=1}^{\infty}
 |c_{\ell+nbm}(g)|.
\]
Inserting \eqref{eq:cl_g} and applying hypothesis~\eqref{eq:Cond_On_g} to bound the integral terms,
\begin{align}\label{eq:aliasing_bound}
 &|g(x) - (\tau^b_n g)(x)|
 \le \frac{2}{b}\sum_{k=1}^{s}
 \left(\frac{b}{2\pi}\right)^{k+1} J_k
 \sum_{\ell=-nb/2}^{(nb/2)-1}
 \sum_{m=1}^{\infty}
 \left(
 \frac{1}{|\ell+nbm|^{k+1}}
 + \frac{1}{|\ell-nbm|^{k+1}}
 \right)
 \notag\\
 &\quad \quad\quad
 + \frac{2M}{b}
 \left(\frac{b}{2\pi}\right)^{s+1+\gamma}
 \sum_{\ell=-nb/2}^{(nb/2)-1}
 \sum_{m=1}^{\infty}
 \left(
 \frac{1}{|\ell+nbm|^{s+1+\gamma}}
 + \frac{1}{|\ell-nbm|^{s+1+\gamma}}
 \right).
\end{align}
Since $|\ell| \le nb/2$ and $m \ge 1$, one has $|\ell \pm nbm| \ge nbm - nb/2 = nb(2m-1)/2$,
so $|\ell \pm nbm|^{-1} \le 2/(nb(2m-1))$. This bound is uniform in $\ell$, so after summing over the $nb$ admissible values of $\ell$, \eqref{eq:aliasing_bound} reduces to
\begin{align*}
 &|g(x) - (\tau^b_n g)(x)|
 \le \\
 &\quad \sum_{k=1}^{s}
 \frac{4J_k}{\pi^{k+1}}\,n^{-k}
 \sum_{m=1}^{\infty}\frac{1}{(2m-1)^{k+1}}
 + \frac{4M}{\pi^{s+1+\gamma}}\,n^{-(s+\gamma)}
 \sum_{m=1}^{\infty}\frac{1}{(2m-1)^{s+1+\gamma}}.
\end{align*}
Since $\sum_{m=1}^\infty (2m-1)^{-p} \le \zeta(p)$ for $p > 1$,
the stated bound follows.
\end{proof}

\subsection{Main Theorem}\label{sec:main_theorem}
 We now state and prove the principal approximation error estimate for the generalized FC-Gram method.
\begin{theorem}
\label{thm:main}
Let $r \in \mathbb{N}\cup \{0\}$ and $f \in C^{r}([0,\,1]) \cap C^{r+1}(0,1)$ with
$f^{(r+1)} \in L^{1}[0,\,1]$.
Let $b \in \mathbb{Q}$, $b > 1$, and suppose there exists $0 \le \beta \le 1$
such that
\begin{equation}
 \label{eq:f-fourier-decay}
 \left|
 \int_{0}^{1} f^{(r+1)}(x)\, e^{-2\pi i\ell x/b}\, \text{d}x
 \right|
 = O\left(|\ell|^{-\beta}\right)
 \quad \text{as } |\ell| \to \infty.
\end{equation}
Let $\mathbb{N}_{b} := \{n \in \mathbb{N} : n \ge d-1, nb \in \mathbb{N},
2 \mid nb\}$.
For $n \in \mathbb{N}_{b}$, let the extension $p = p_{\eta}^{d,n}$ be
constructed as in \eqref{p} with $d \in \mathbb{N}$ Gram
polynomials and a shape-function family $(\eta_{\ell})_{\ell=0}^{d-1}$
satisfying the hypotheses of Section~\ref{sec:GramPolyCont}.
Then there exists a constant $M > 0$, depending on $f$, $r$, $d$, and the
shape-function family but independent of $n$, such that
\begin{equation}
 \label{eq:main-bound}
 \left\|
 \tau_{n}^{b}\left(e[p](f)\right) - f
 \right\|_{\infty,[0,\,1]}
 \le M\, n^{-\min(r+\beta, \, d)}
 \qquad \forall\, n \in \mathbb{N}_{b}.
\end{equation}
\end{theorem}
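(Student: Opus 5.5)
We use the decomposition \eqref{eq:error_decomp}, $f-\tau_n^b(e[p]f)=E_1+E_2$ on $[0,1]$, and apply Lemma~\ref{GenFC:lemma_interp_error} to each term separately. We work with $s=\min(r,d)$ and $m=\min(r,d-1)$.

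\textbf{The term $E_1$.} Here $g=e[p_{\text{ref}}]f$ with $g_1=f$ and $g_2=p_{\text{ref}}$. By Lemma~\ref{lemma:pref}, $p_{\text{ref}}^{(k)}(1^+)=f^{(k)}(1)$ and $p_{\text{ref}}^{(k)}(b^-)=f^{(k)}(0)$ for $k\le m$, so $J_k=0$ for $1\le k\le m$.

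\emph{Case $r\le d-1$.} Then $s=m=r$ and all $J_k$ in \eqref{eq:interp_bound} vanish. The first integral in \eqref{eq:Cond_On_g} is $O(|\ell|^{-\beta})$ by \eqref{eq:f-fourier-decay}. The second is $O(|\ell|^{-1})\subset O(|\ell|^{-\beta})$ by Lemma~\ref{lemma:sufficientCondEta}. Hence $\gamma=\beta$ and $\|E_1\|_\infty\le Cn^{-(r+\beta)}$. If $r+\beta=0$ the lemma does not apply. In that case $r=0$ and $\beta=0$, the claimed rate is $n^{0}$, and I would settle for uniform boundedness of the interpolant; this is a side issue that I expect the authors either exclude or handle by a crude bound.

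\emph{Case $r\ge d$.} Then $s=d$ and $m=d-1$, so only $J_d$ survives. Lemma~\ref{lemma:pref} covers only $k\le d-1$, and $p_{\text{ref}}^{(d)}(1^+)$ need not equal $f^{(d)}(1)$. It is, however, bounded uniformly in $n$. The Leibniz rule gives powers $(2/\delta)^j$ times coefficients of size $\delta^\ell$ with $j\le\ell$ (Lemma~\ref{lemma:EstimateOnPkRk}), and the derivatives of $\eta_\ell$ are fixed. Hence $J_d=O(1)$ and the $J_d$ term contributes $O(n^{-d})$. For the remainder term take $\gamma=0$, using only uniform $L^1$ bounds on $f^{(d+1)}$ (valid since $r\ge d$ implies $f\in C^{d+1}$) and on $p_{\text{ref}}^{(d+1)}$; this gives $O(n^{-d})$. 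Better, take $\gamma=1$ via Lemma~\ref{lemma:sufficientCondEta}, with $f^{(d+1)}$ treated by integration by parts if $r\ge d+1$, or $\gamma=\beta$ if $r=d$.

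\textbf{The term $E_2$.} Here $g_1=0$ on $[0,1]$ and $g_2=p-p_{\text{ref}}$. By Lemmas~\ref{lemma:pref} and~\ref{lemma:p}, for $k\le m$ the jumps are
\[
(2/\delta)^k\sum_{\ell\ge k}\langle R_m(f^{R}),p_\ell\rangle p_\ell^{(k)}(1),
\]
and similarly at $b^-$. Lemma~\ref{lemma:EstimateOnPkRk} bounds these by $C\delta^{m+1-k}$, so $J_k=O(n^{-(m+1-k)})$. The $k$-th term in \eqref{eq:interp_bound} is then $O(n^{-k}n^{-(m+1-k)})=O(n^{-(m+1)})$, and $m+1=\min(r+1,d)\ge\min(r+\beta,d)$. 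The integral condition holds with $\gamma=1$ by Lemma~\ref{lemma:sufficientCondEta}, but the bound from that lemma must be tracked in $n$. I would redo it, observing that each Leibniz term carries $\delta^{m+1-\ell}(2/\delta)^{\ell}$ times a power of $\delta$ from the boundary values. This should give $M=O(\delta^{\,m+1-s})$ or better, so the tail is $O(n^{-(s+1)})$ up to this factor, i.e.\ $O(n^{-\min(r+1,d)})$ overall. The jump $J_s$ when $s=d>m$ is bounded by $(2/\delta)^d\delta^{d}=O(1)$, giving $O(n^{-d})$.

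The main obstacle is this bookkeeping: ensuring that the constant $M$ in \eqref{eq:Cond_On_g} and the jumps $J_k$ for $E_2$ (and $J_d$ for $E_1$) carry exactly the right powers of $\delta$, in particular at the top derivative index $s=d$ when $r\ge d$, where Lemma~\ref{lemma:p} no longer applies. I would handle that case directly from \eqref{eq:PropertiesOfBlendingCont} together with the regularity and flatness of $\eta_\ell$. Summing the $E_1$ and $E_2$ bounds gives \eqref{eq:main-bound}.
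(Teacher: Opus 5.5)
Your architecture matches the paper's: the split $E_1+E_2$, Lemma~\ref{GenFC:lemma_interp_error} with $s=\min(r,d)$, and Lemmas~\ref{lemma:pref}--\ref{lemma:EstimateOnPkRk} for the jumps. Your $E_1$ argument is sound, and your uniform-in-$n$ bound on $J_d$ is more careful than the paper's ``finite''. You are also right that the constant $M$ in \eqref{eq:Cond_On_g} for $E_2$ must be tracked in $n$.

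\textbf{The gap.} The estimate you then assert, $M=O(\delta^{m+1-s})$, is false, and with $s=\min(r,d)$ the $E_2$ tail cannot be closed. The reason is that
\[
p-p_{\text{ref}}=\sum_{\ell=0}^{d-1}\langle R_m(f^R),p_\ell\rangle\,p_\ell^{R,e}+\sum_{\ell=0}^{d-1}\langle R_m(f^L),p_\ell\rangle\,p_\ell^{L,e},
\]
where $\ell$ runs up to $d-1$, not $m$, because $\langle f^R,p_\ell\rangle=\langle R_m(f^R),p_\ell\rangle$ for $\ell>m$. On $[1,b]$ the factor $p_\ell^R=p_\ell\circ\varphi_R$ is evaluated at arguments of size $O(1/\delta)$, so every derivative of $p_\ell^{R,e}$ is of size $\delta^{-\ell}$ there. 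In Case II ($m=s=r$) this gives $\|(p-p_{\text{ref}})^{(r+2)}\|_{1,[1,b]}\sim\delta^{m+1}\delta^{-(d-1)}=\delta^{r+2-d}$. This happens whenever $\langle R_m(f^R),p_{d-1}\rangle$ is really of the size $\delta^{m+1}$ allowed by Lemma~\ref{lemma:EstimateOnPkRk}, i.e.\ when $f$ is not smooth at an endpoint. Even the boundary term $(p-p_{\text{ref}})^{(r+1)}(1^+)=(2/\delta)^{r+1}\sum_\ell\langle R_r(f^R),p_\ell\rangle p_\ell^{(r+1)}(1)$ is $O(1)$, not $O(\delta)$, when $r\le d-2$. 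Your tail term is then of order $n^{d-r-2}\cdot n^{-(r+1)}=n^{d-2r-3}$. This is in general not $O(n^{-(r+\beta)})$ once $r\le d-3$; for $r=0$, $d=5$ it grows like $n^{2}$. The paper closes this step only by citing the second assertion of Lemma~\ref{lemma:sufficientCondEta}, whose proof assumes $\ell\le m$ in $p-p_{\text{ref}}$. It fails for the same reason, so you cannot lean on it.

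\textbf{The missing idea.} For $E_2$ you need not tie $s$ to $r$. Here $g_1\equiv0$, and $g_2=p-p_{\text{ref}}\in C^{d-1}([1,b])$ with $(p-p_{\text{ref}})^{(d)}\in L^1$. Apply Lemma~\ref{GenFC:lemma_interp_error} with $s=d-1$ and $\gamma=0$; this needs $d\ge2$, which the construction requires anyway since $\delta=(d-1)/n$. The lemma's integration-by-parts proof needs only $g_2^{(d-1)}$ absolutely continuous.
\begin{itemize}
\item \emph{Jumps.} By \eqref{eq:PropertiesOfBlendingCont}, for every $1\le k\le d-1$ (not only $k\le m$) one has $(p-p_{\text{ref}})^{(k)}(1^+)=(2/\delta)^k\sum_{\ell}\langle R_m(f^R),p_\ell\rangle p_\ell^{(k)}(1)$, and similarly at $b^-$. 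Hence $J_k\lesssim\delta^{m+1-k}$, and each jump term is $O(n^{-(m+1)})$.
\item \emph{Tail.} The tail is bounded by a constant times $n^{-(d-1)}\|(p-p_{\text{ref}})^{(d)}\|_{1,[1,b]}\lesssim n^{-(d-1)}\delta^{m+2-d}=O(n^{-(m+1)})$.
\end{itemize}
The $\delta^{-\ell}$ growth of the continuation is thus paid for exactly by the extra integrations by parts, and $\|E_2\|_{\infty,[0,1]}=O(n^{-\min(r+1,d)})$.

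\textbf{Two smaller points you share with the paper.}
\begin{itemize}
\item The coefficient bound $|\langle R_m(f^{L,R}),p_\ell\rangle|\lesssim\delta^{m+1}$ from Lemma~\ref{lemma:EstimateOnPkRk} needs $f^{(m+1)}$ bounded. In Case II that is $f^{(r+1)}$, which the theorem assumes only integrable.
\item The case $r+\beta=0$ that you flag is not treated in the paper's proof either.
\end{itemize}
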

 
\begin{remark}
\label{rem:beta}
When $\beta = 0$ the Fourier decay condition~\eqref{eq:f-fourier-decay}
is automatically satisfied (it reduces to $f^{(r+1)} \in L^{1}$), and
the bound becomes $O(n^{-\min(r,\, d)})$. If additionally $f^{(r+1)}$ is of bounded variation, then a single
integration by parts shows that its Fourier integral decays as
$O(|\ell|^{-1})$, giving $\beta = 1$ and improving the rate to
$O(n^{-\min(r+1,\, d)})$; see~\cite[Ch.~I]{katznelson2004}.
In either case the rate saturates once $d \geq r + \beta$; increasing
$d$ further does not improve the bound for a given $f$.
\end{remark}
 
\begin{proof}
Set $m := \min(r,\,d-1)$ and $s := \min(r, d)$.
All constants that appear are independent of $\delta$ (equivalently $n$). By the decomposition \eqref{eq:error_decomp}, for every $x \in [0,\,1]$,
\[
 f(x) - \left(\tau_{n}^{b}(e[p]f)\right)(x) = E_{1}(x) + E_{2}(x),
\]
where $E_1$ and $E_2$ are as defined in \eqref{eq:E1E2}.
Each has the form $g(x) - (\tau_{n}^{b}g)(x)$ for a $b$-periodic function
$g$, so it suffices to apply Lemma~\ref{GenFC:lemma_interp_error} to each in turn.

We first verify condition~\eqref{eq:Cond_On_g} for each term.
For $E_1$, take $g_1 = f$ on $[0,\,1]$ and $g_2 = p_{\text{ref}}$ on $(1,b)$.
Condition~\eqref{eq:f-fourier-decay} controls the integral of $g_1^{(s+1)}$
with exponent $\gamma = \beta$ (or $\gamma = 0$ when $d \le r$, in which case
$s = d \le r$ and $f^{(d+1)} \in L^{1}$ suffices).
The first assertion of Lemma~\ref{lemma:sufficientCondEta} gives $O(|\ell|^{-1})$ decay for the integral of $g_2^{(s+1)} = p_{\text{ref}}^{(s+1)}$, and since $\beta \le 1$ the combined bound \eqref{eq:Cond_On_g} holds with $\gamma = \beta$.
For $E_2$, take $g_1 = 0$ and $g_2 = p - p_{\text{ref}}$.
The first integral vanishes, and the second assertion of Lemma~\ref{lemma:sufficientCondEta} provides $O(|\ell|^{-1})$ decay for the integral of $(p - p_{\text{ref}})^{(s+1)}$, so \eqref{eq:Cond_On_g} holds for $E_2$ with $\gamma = 1$.

We now bound $\|E_1\|_{\infty,[0,\,1]}$.
Apply Lemma~\ref{GenFC:lemma_interp_error} to $g = e[p_{\text{ref}}]f$ with
$s = \min(r,d)$ and $\gamma = \beta$.
The jump quantities $J_k$ in \eqref{eq:J_k} involve the one-sided derivatives
of $f$ and $p_{\text{ref}}$ at $x = 0^+/b^-$ and $x = 1^{\pm}$.
By Lemma~\ref{lemma:pref},
$p_{\text{ref}}^{(k)}(1^+) = f^{(k)}(1)$ and
$p_{\text{ref}}^{(k)}(b^-) = f^{(k)}(0)$ for $k = 0, \ldots, m$,
so
\begin{equation}
 \label{eq:E1-Jk-zero}
 J_k = 0 \qquad \text{for } k = 1, \ldots, m.
\end{equation}

\emph{Case~I: $d - 1 < r$.}
Here $s = d$ and $m = d-1$.
Equation~\eqref{eq:E1-Jk-zero} eliminates terms $k = 1, \ldots, d-1$ from
the sum in \eqref{eq:interp_bound}.
For $k = d$ we treat $J_d$ directly. Since $\eta_\ell^R$ and $\eta_\ell^L$
are piecewise $C^{d+2}$, the one-sided limits $p_{\text{ref}}^{(d)}(1^+)$
and $p_{\text{ref}}^{(d)}(b^-)$ exist and are finite. Since $f \in C^r$ with
$r > d-1$, the values $f^{(d)}(1)$ and $f^{(d)}(0)$ are likewise finite.
Hence $J_d$ is finite and the $k = d$ term in \eqref{eq:interp_bound}
contributes $O(n^{-d})$.
Together with the integral term (which is also $O(n^{-d})$ when $\gamma = 0$),
\begin{equation}
 \label{eq:E1-caseI}
 \|E_{1}\|_{\infty,[0,\,1]} \le M_1 n^{-d} = M_1 n^{-\min(r+\beta,\, d)},
\end{equation}
where the equality holds because $d \le r < r + \beta$ in this case.

\emph{Case~II: $d - 1 \ge r$.}
Here $s = r = m$, so \eqref{eq:E1-Jk-zero} eliminates terms $k = 1, \ldots, r$
from the sum, leaving only the integral term in \eqref{eq:interp_bound}.
With $\gamma = \beta$,
\begin{equation}
 \label{eq:E1-caseII}
 \|E_{1}\|_{\infty,[0,\,1]}
 \le M_1 n^{-(r+\beta)} = M_1 n^{-\min(r+\beta,\, d)},
\end{equation}
where the equality holds because $r + \beta \le d$ in this case.

In both cases there exists $M_1 > 0$, independent of $n$, such that
\begin{equation}
 \label{eq:E1-final}
 \|E_{1}\|_{\infty,[0,\,1]} \le M_1 n^{-\min(r+\beta,\, d)}.
\end{equation}

Turning to $E_2$, apply Lemma~\ref{GenFC:lemma_interp_error} to $g_1 \equiv 0$ on
$[0,\,1]$ and $g_2 = p - p_{\text{ref}}$ on $(1,b)$, with $\gamma = 1$.
The jump quantities reduce to
\[
 J_k = \left|(p - p_{\text{ref}})^{(k)}(b^-)\right|
 + \left|(p - p_{\text{ref}})^{(k)}(1^+)\right|.
\]
By Lemmas~\ref{lemma:pref} and~\ref{lemma:p}, for $0 \le k \le m$,
\[
 (p - p_{\text{ref}})^{(k)}(1^+)
 = \left(\frac{2}{\delta}\right)^{k}
 \sum_{\ell=k}^{d-1}
 \langle R_m(f^R), \, p_\ell \rangle\, p_\ell^{(k)}(1),
\]
and analogously at $b^-$.
Applying the remainder bound of Lemma~\ref{lemma:EstimateOnPkRk}
and substituting $\delta = (d-1)/n$,
\begin{equation}\label{eq:E2-Jk-bound}
 J_k \le C_k\,\frac{(d-1)^{m+1-k}}{n^{m+1-k}},
 \qquad k = 0, \dots, m,
\end{equation}
where $C_k > 0$ depends only on $f$, $d$, and the Gram polynomials.

\emph{Case~I: $d-1 < r$.}
Here $s = d$, $m = d-1$, and $\gamma = 1$.
The bound \eqref{eq:E2-Jk-bound} covers $k = 1, \dots, d-1$. For $k = d$, the $d$-th one-sided derivatives of $p_\ell^{L,e}$ and $p_\ell^{R,e}$ at the endpoints exist by the piecewise $C^{d+2}$ assumption
on $\eta$, so $(p - p_{\text{ref}})^{(d)}$ has finite one-sided limits at $1^+$ and $b^-$. Moreover, since these limits are linear combinations of the remainder coefficients $\langle R_m(f^{L,R}), p_\ell \rangle$, each of which satisfies $|\langle R_m(f^L), p_\ell \rangle| \le D_1^{m,\ell}\,\delta^{m+1}
= O(n^{-d})$ by Lemma~\ref{lemma:EstimateOnPkRk} (with $m = d-1$), it follows that $J_d = O(n^{-d})$. The $k=d$ term in \eqref{eq:interp_bound} therefore contributes $O(n^{-2d})$, which is absorbed into $O(n^{-d})$.
Substituting \eqref{eq:E2-Jk-bound} into \eqref{eq:interp_bound},
\begin{align*}
\|E_2\|_{\infty,[0,\,1]} &\le \sum_{k=1}^{d-1}
 \frac{4C_k(d-1)^{d-k}\,\zeta(k+1)}{\pi^{k+1}}\,n^{-d}
 + \frac{4M_2\,\zeta(d+2)}{\pi^{d+2}}\,n^{-(d+1)}\\
 &\le M_2\,n^{-\min(r+\beta,\, d)}, 
\end{align*}
since $d < r \le r + \beta$ in this case.

\emph{Case~II: $d-1 \ge r$.}
Here $s = m = r$ and $\gamma = 1$.
Lemma~\ref{GenFC:lemma_interp_error} involves only $J_k$ for $k = 1, \dots, r$,
all bounded by \eqref{eq:E2-Jk-bound}.
Substituting into \eqref{eq:interp_bound},
\begin{align*}
 \|E_2\|_{\infty,[0,\,1]}
 &\le \sum_{k=1}^{r}
 \frac{4C_k (d-1)^{r+1-k}\,\zeta(k+1)}{\pi^{k+1}}\,n^{-(r+1)}
 + \frac{4M_2\,\zeta(r+2)}{\pi^{r+2}}\,n^{-(r+1)}\\
 &\le M_2\,n^{-(r+\beta)},
\end{align*}
since $n^{-(r+1)} \le n^{-(r+\beta)}$ because $\beta \le 1$.
In both cases there exists $M_2 > 0$, independent of $n$, such that
\begin{equation}\label{eq:E2-final}
 \|E_2\|_{\infty,[0,\,1]} \le M_2\,n^{-\min(r+\beta,\, d)}.
\end{equation}

Setting $M := M_1 + M_2$ and combining \eqref{eq:E1-final} and \eqref{eq:E2-final} gives
\[
 \left\|\tau_n^b(e[p]f) - f\right\|_{\infty,[0,\,1]}
 \le \|E_1\|_{\infty,[0,\,1]} + \|E_2\|_{\infty,[0,\,1]}
 \le M\,n^{-\min(r+\beta,\, d)},
\]
which completes the proof.
\end{proof}

\section{Shape functions}\label{sec:ShapeFunc}
In Section~\ref{sec:GramPolyCont} we have seen that the family of shape
functions $(\eta_\ell^{L}, \eta_\ell^{R})_{\ell=0}^{d-1}$ plays a central
role in constructing the trigonometric interpolant. This section provides
an explicit framework for building such families.

\subsection{Shape functions utilizing two-point Hermite polynomials}
\label{sec:ModFC_shape_function}
A detailed construction of blending-to-zero continuation of Gram 
polynomials using two-point Hermite polynomials is provided in 
\cite{Nainwal_Anand_ModFC_2025}. As the mentioned continuation 
strategy brought several benefits in FC-based approximation strategy, 
we show that the modified FC-Gram (ModFC) turns out to be a particular 
case of GenFC. More precisely, if we consider the following family of 
shape functions:
\begin{align}\label{eta_Hermite}
\eta_\ell^R(x) = \frac{1}{p_{\ell}^R(x)}\sum_{m=0}^{d-1} 
(p_{\ell}^R)^{(m)}(1)p_m^{1, b}(x), \quad
\eta_\ell^L(x) = \frac{1}{p_{\ell}^L(x)}\sum_{m=0}^{d-1} 
(p_{\ell}^L)^{(m)}(b)p_m^{b, 1}(x)
\end{align}
for $0\leq \ell\leq d-1$, where for given positive integer $d$ and 
$x_1 \ne x_2$, we have
\begin{align*}
p_m^{x_1, x_2}(x) &= \frac{1}{m!}(x - x_1)^m 
\left( \frac{x - x_2}{x_1 - x_2} \right)^d 
\sum_{\ell = 0}^{d - (m + 1)} \binom{d + \ell - 1}{d - 1} 
\left( \frac{x - x_1}{x_2 - x_1} \right)^{\ell},
\end{align*}
satisfying
\begin{align*}
\left( p_m^{x_1, x_2} \right)^{(\ell)}(x_1) = \delta_{m\ell}, \quad 
\left( p_m^{x_1, x_2} \right)^{(\ell)}(x_2) = 0, \quad 
0 \leq \ell \leq d - 1.
\end{align*}
The zeros of each Gram polynomial $p_\ell$ lie strictly in $(-1,1)$ 
\cite[Ch.~1]{gautschi2004orthogonal}, and since $\varphi_R$ maps $[1,\,b]$ to values 
outside $[-1,1]$, it follows that $p_\ell^R(x) = p_\ell(\varphi_R(x)) 
\neq 0$ for all $x \in [1,b]$, and analogously $p_\ell^L(x) \neq 0$ 
on $[1,\,b]$. Hence the denominators in \eqref{eta_Hermite} are 
nonzero and the shape functions are well defined. Substituting 
\eqref{eta_Hermite} into \eqref{eq:blendingCont}, the factor 
$p_\ell^R(x)$ cancels and gives
\[
p_\ell^{R,e}(x) = \sum_{m=0}^{d-1}(p_\ell^R)^{(m)}(1)\,p_m^{1,b}(x),
\]
and analogously $p_\ell^{L,e}(x) = 
\sum_{m=0}^{d-1}(p_\ell^L)^{(m)}(b)\,p_m^{b,1}(x)$.
Since $(p_m^{1,b})^{(k)}(1)=\delta_{mk}$ and 
$(p_m^{1,b})^{(k)}(b)=0$ for $0\le k\le d-1$, 
conditions~\eqref{eq:PropertiesOfBlendingCont} are satisfied, and 
the resulting extension~\eqref{p} coincides with the two-point 
Hermite polynomial based blending continuation 
of~\cite{Nainwal_Anand_ModFC_2025}. The shape functions in~\eqref{eta_Hermite} are tied to the Gram
polynomials through the denominators $p_\ell^R$ and $p_\ell^L$, and
depend on $\delta = (d-1)/n$.

We now introduce three families that are constructed independently of
the Gram polynomials and admit $\sigma$ as a free parameter. Given $d$
and $b$, we present the construction of $\eta_\ell^R$ for $0 \leq \ell \leq d-1$; the left counterpart is defined by
\[
 \eta_\ell^L(x) = \eta_\ell^R(b+1-x).
\]
The key idea is to build $\eta_\ell^R$ from a suitably chosen smooth
function $\Phi_\ell$ as
\begin{equation}\label{eq:etaUnified}
 \eta_\ell^R(x;\sigma) =
 \begin{cases}
 1, & x = 1, \\[8pt]
 \Phi_\ell\!\left(\dfrac{x-1}{\sigma-1}\right), & 1 < x \leq \sigma, \\[8pt]
 0, & \sigma < x \leq b,
 \end{cases}
\end{equation}
where $\sigma \in (1,b]$ is a free parameter controlling the width of
the active transition region. We choose $\Phi_\ell$ to be smooth and to
satisfy the flatness conditions
\begin{equation}\label{eq:PhiCond}
 \Phi_\ell(0) = 1, \quad \Phi_\ell(1) = 0, \qquad
 \Phi_\ell^{(k)}(0) = \Phi_\ell^{(k)}(1) = 0,
 \quad k = 1, \ldots, d-1.
\end{equation}
Through the chain rule, these conditions give
$(\eta_\ell^R)^{(k)}(1) = \delta_{k0}$ and
$(\eta_\ell^R)^{(k)}(\sigma^-) = 0$ for $k = 0, \ldots, d-1$, and since
$\eta_\ell^R \equiv 0$ on $(\sigma,b]$, the conditions at $x = b$ hold
trivially. The three families below are each smooth enough that
$\eta_\ell^R$ meets all the requirements given in Section~\ref{sec:GramPolyCont}.

\subsection{Shape functions utilizing the regularized Beta
 function}\label{subsec:ShapeFuncBeta}
Recall the regularized incomplete Beta function
\[
 B_d(s) := \frac{1}{B(d+2,d+2)}\int_0^s t^{d+1}(1-t)^{d+1}\,\mathrm{d}t,
 \qquad s \in [0,1].
\]
Since the integrand is a polynomial of degree $2(d+1)$, $B_d$ is smooth on $[0,1]$ and satisfies $B_d(0) = 0$, $B_d(1) = 1$, and $B_d^{(k)}(0) = B_d^{(k)}(1) = 0$ for $k = 1, \ldots, d+1$. Define
\begin{equation}\label{eq:PhiBeta}
 \Phi^{\mathrm{Beta}}(\xi) := 1 - B_d(\xi).
\end{equation}
Then $\Phi^{\mathrm{Beta}}$ satisfies the flatness
conditions~\eqref{eq:PhiCond}. Since $\Phi^{\mathrm{Beta}}$ is a polynomial, the resulting shape function $\eta_\ell^R$ obtained via~\eqref{eq:etaUnified} is piecewise polynomial and satisfies all the requirements of Section~\ref{sec:GramPolyCont}.

Other smooth families satisfying~\eqref{eq:PhiCond} and the regularity assumptions of Section~\ref{sec:GramPolyCont} can be constructed from standard cutoff functions. Two representative examples are the bump function
\begin{equation}\label{eq:PhiBump}
 \Phi^{\mathrm{Bump}}(\xi)
 := \frac{\varphi(1-\xi)}{\varphi(\xi)+\varphi(1-\xi)},
 \quad
 \varphi(t) = \exp\!\left(-\frac{\log 2}{2t}\right),
 \quad t\in(0,1],
\end{equation}
with $\varphi(0):=0$ and the double-exponential transition
\begin{equation}\label{eq:PhiDExp}
 \Phi^{\mathrm{DExp}}(\xi)
 := \exp\!\left(\frac{2e^{-1/\xi}}{\xi - 1}\right).
\end{equation}
Both belong to $C^\infty([0,\,1])$, satisfy~\eqref{eq:PhiCond} to all orders,
and are independent of $\ell$ and $n$. Blend-to-zero operators on closed intervals, including their construction and connection to smooth transition functions, are studied in~\cite{mendez2026blend}.

\section{Numerical Results} \label{sec:numerics_GenFC}
This section validates the theoretical predictions of
Theorem~\ref{thm:main} through a series of numerical experiments.
According to estimate~\eqref{eq:main-bound}, the convergence rate
$\min(r+\beta,\,d)$ depends on the smoothness $r$ of the target
function, the Fourier-decay exponent $\beta$ defined by
\eqref{eq:f-fourier-decay}, and the number of Gram polynomials $d$. We examine the effect of each parameter individually.

Throughout, the relative approximation error is defined as
\begin{equation}\label{eq:rel-err}
 e_n := \frac{\displaystyle\max_{0\leq j\leq N}
 \left|\tau^b_n f(z_j) - f(z_j)\right|}
 {\displaystyle\max_{0\leq j\leq N}|f(z_j)|},
\end{equation}
where $N = 2^{17}$ and $z_j = j/N$ are evaluation points on a fine
uniform reference grid, and the numerical order of convergence is
\begin{equation}\label{eq:noc}
 \text{noc}_n = \log_2\left(e_{n/2}/e_n\right).
\end{equation}

All experiments are implemented in \textsc{Matlab} and performed on an
Apple M1 laptop. Throughout this section and Section~\ref{sec:GenFC_application}, we use the regularized-Beta shape functions of Section~\ref{subsec:ShapeFuncBeta} as the representative family
for GenFC; the choice is made for uniformity across experiments and any
of the other families in Section~\ref{sec:ShapeFunc} would be equally valid.
We recall that ModFC corresponds to the Hermite shape functions of
Section~\ref{sec:ModFC_shape_function}, which carry no free parameters.
The regularized-Beta family admits a free parameter $\sigma_\ell\in(1,b]$
for each $\ell=0,\ldots,d-1$, written as $\sigma_\ell = 1 +
\widetilde{\sigma}_\ell(b-1)$. Since $\Phi^{\mathrm{Beta}}$ takes
values in $[0,\,1]$, the sup-norm of the blended continuation satisfies
\[
 \|p_\ell^{R,e}\|_{\infty,[1,b]} \leq \|p_\ell^R\|_{\infty,[1,\sigma_\ell]},
\]
and the same bound holds for $p_\ell^{L,e}$. The right-hand side
grows with $\ell$, since $p_\ell^R$ is a polynomial of degree $\ell$
evaluated outside $[-1,1]$, and also grows with $\sigma_\ell$ and $b$.
Reducing $\sigma_\ell$ therefore reduces the sup-norm of the
continuation, with a larger effect for higher $\ell$. In practice,
however, the approximation results are not sensitive to the precise
value of $\sigma_\ell$, and we fix $\widetilde{\sigma}_0 = 1/3$ and
$\widetilde{\sigma}_\ell = 1/10$ for $\ell\geq 1$ throughout all experiments in the paper.

\begin{figure}[htbp]
\centering
\subfloat[$f(x) = \exp\left(\sin(65.5\pi x - 27\pi) - \cos(20.6\pi x)\right)$]%
{\includegraphics[width=0.48\textwidth,
 trim=0cm 0cm 0cm 0cm, clip]
 {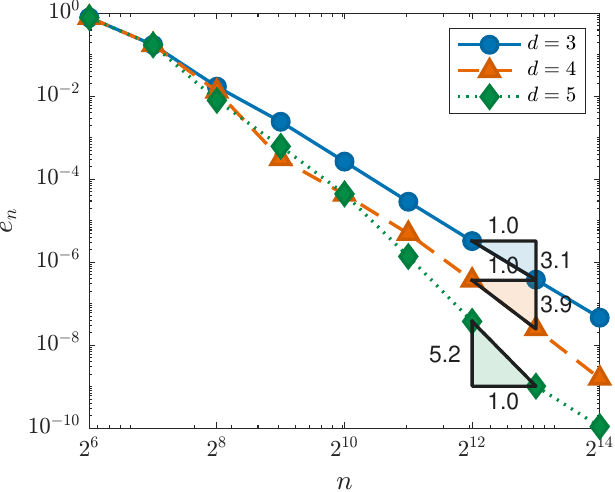}
\label{fig:effectOfd_Case1}}
\hfill
\subfloat[$f(x) = |x - 1/2|^{7/2}$]%
{\includegraphics[width=0.48\textwidth,
 trim=0cm 0cm 0cm 0cm, clip]
{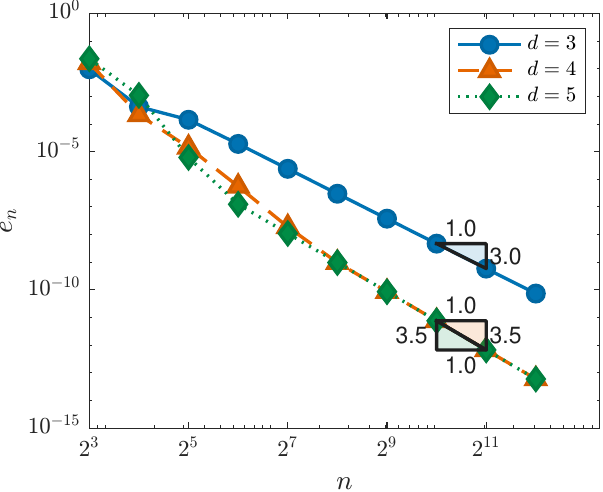}
\label{fig:effectOfd_Case2}}
\caption{log--log convergence plots for the generalized FC-Gram approximation computed with Gram polynomials of degree $d = 3,\, 4,\, 5$ and periodic extension length $b = 2$.}
\label{fig:effectOfd}
\end{figure}

\subsection{Effect of \texorpdfstring{$d$}{d}}
\label{sec:effect-d}
In this section we validate estimate~\eqref{eq:main-bound} by examining
how the number of Gram polynomials $d$ governs the convergence rate.
We present two cases: one where $f$ is infinitely smooth and one where $f$ has limited regularity. For all the experiments in this section the length of periodic continuation is set as $b = 2$.

\begin{figure}[http]
\centering
\subfloat[]%
{\includegraphics[width=0.48\textwidth,
 trim=0cm 0cm 0cm 0cm, clip]
 {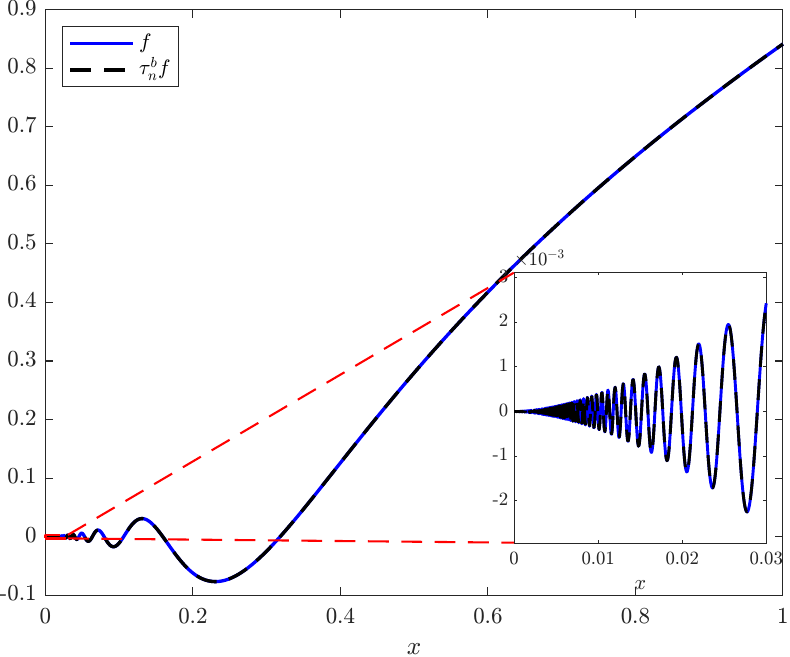}
\label{fig:effectOfd_Case3}}
\hfill
\subfloat[]%
{\includegraphics[width=0.48\textwidth,
 trim=0cm 0cm 0cm 0cm, clip]
{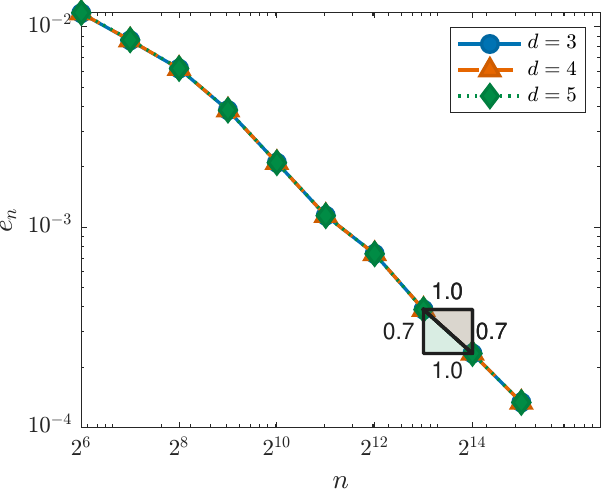}
\label{fig:effectOfd_Case4}}
\caption{Generalized FC-Gram approximation of
$f(x) = x^{1.7}\sin(1/x)$ with $b = 2$.
\textit{Left:} The function $f$ (solid blue) and its approximation
$\tau_n^b f$ (dashed black) on $[0,\,1]$ for $n = 2^{12}$; the inset magnifies
the oscillatory region near $x = 0$.
\textit{Right:} log--log convergence plots for $d = 3,\, 4,\, 5$, asymptotically
attaining the predicted rate $\beta = 0.7$.}
\label{fig:effectOfd_oscillatory}
\end{figure}

We consider $f(x) = \exp\left(\sin(65.5\pi x - 27\pi) - \cos(20.6\pi x)\right)$, an infinitely smooth function. Since $r = \infty$, the expected theoretical rate reduces to $d$, and the accuracy is governed entirely by the number of Gram polynomials used in the boundary projection. Figure~\ref{fig:effectOfd_Case1} clearly illustrates this. The observed $\text{noc}_n$ closely matches $d$ for each of $d = 3$, $4$, and $5$ throughout the computable range. Increasing $d$ consistently and predictably improves the rate, as expected for an infinitely smooth function.

Next, we consider $f(x) = |x - 1/2|^{7/2}$, where $r = 3$ and
$\beta = 1/2$, so the expected rate is $\min(7/2, \,d)$.
Figure~\ref{fig:effectOfd_Case2} shows that the rate increases with $d$
from $d = 3$ to $d = 4$, but for $d = 5$ the asymptotic slope of the
convergence curve matches that of $d = 4$, with both attaining the
predicted rate $\mathrm{noc}_n \approx 7/2$.
This is precisely what Remark~\ref{rem:beta} predicts, as once $d$ exceeds
the regularity threshold $r + \beta = 7/2$, the smoothness of $f$
becomes the binding constraint and increasing $d$ further brings no benefit.

We apply the proposed algorithm to $f(x) = x^{1.7}\sin(1/x)$. This function exhibits increasingly rapid oscillations as $x \to 0^+$, as illustrated in the left panel of Figure~\ref{fig:effectOfd_oscillatory}. It satisfies the decay condition \eqref{eq:f-fourier-decay} with $r = 0$ and $\beta = 0.7$, which yields a predicted theoretical convergence rate of $\min(\beta,\,d) = 0.7$ for all $d \geq 1$. Despite the challenging oscillatory behavior near the origin, the generalized FC-Gram approximation $\tau_n^b f$ with $n = 2^{12}$ and $b = 2$ captures $f$ faithfully on $[0,\,1]$, as seen in the left panel. The right panel of Figure~\ref{fig:effectOfd_oscillatory} displays the log--log convergence curves for $d = 3,\, 4,\, 5$, all of which asymptotically attain a rate close to $0.7$, in close agreement with the theoretical prediction. 

\begin{figure}[http]
\centering
\subfloat[$f(x) = \exp\left(\sin(65.5\pi x - 27\pi) - \cos(20.6\pi x)\right)$]%
{\includegraphics[width=0.48\textwidth,
 trim=0cm 0cm 0cm 0cm, clip]
 {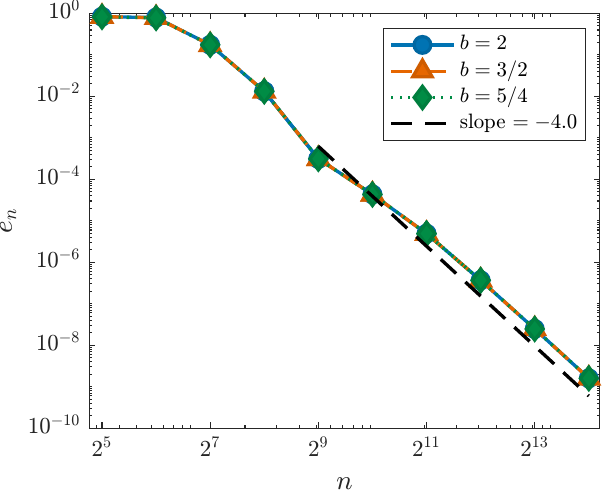}
\label{fig:effectOfb_Case1}}
\hfill
\subfloat[$f(x) = |x - 1/2|^{7/2}$]%
{\includegraphics[width=0.48\textwidth,
 trim=0cm 0cm 0cm 0cm, clip]
 {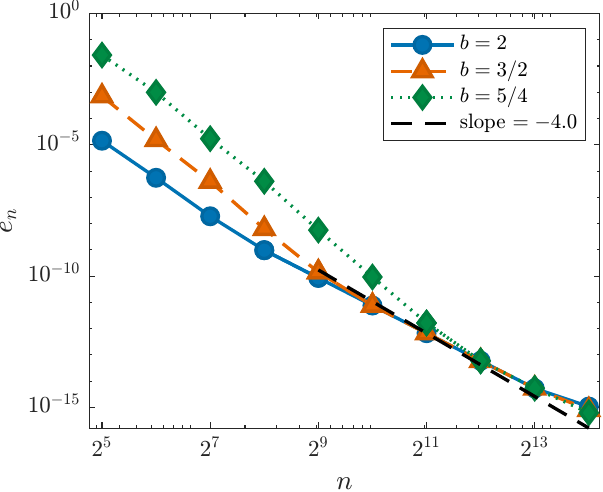}
\label{fig:effectOfb_Case2}}
\caption{log--log convergence plots for the generalized FC-Gram approximation
computed with Gram polynomials of degree $d = 4$ and periodic extension length
$b = 2,\, 1.5,\, 1.25$.}
\label{fig:effectOfb}
\end{figure}

We also verify that the asymptotic rate is robust to the choice of $b$.
Fixing $d = 4$ and varying $b \in \{2,\, 3/2,\, 5/4\}$, the predicted 
rates are confirmed for the smooth function and $f(x) = |x-1/2|^{7/2}$; 
see Figures~\ref{fig:effectOfb_Case1} and~\ref{fig:effectOfb_Case2}.
For $f(x) = x^{1.7}\sin(1/x)$, the convergence rate is already 
confirmed in Figure~\ref{fig:effectOfd_oscillatory} and the 
independence from $b$ follows from the same argument. This confirms that the asymptotic rate $\min(r+\beta,\,d)$ established in Theorem~\ref{thm:main} is independent of the extension length $b$.

\subsection{Effect of $r+\beta$}
We examine how the regularity of $f$ and the Fourier-decay exponent 
$\beta$ each contribute to the convergence rate. Throughout this 
section we use $f(x) = (1-x)^{r+\beta}$, for which both quantities 
are determined exactly by the choice of exponent, and fix $d = 5$ and 
$b = 2$ so that $\min(r+\beta,\, d) = r+\beta$.

First, we fix $r = 3$ and vary $\beta = 0.2,\, 0.4,\, 0.8$, for which 
the predicted rates are $3.2,\, 3.4,\, 3.8$; the observed rates in 
Figure~\ref{fig:effectOfbeta} are in close agreement. Next, we fix 
$\beta = 0.5$ and take $r = 2,\, 3,\, 4$, giving predicted rates 
$2.5,\, 3.5,\, 4.5$, which are confirmed in Figure~\ref{fig:effectOfr}. 
In both cases the rate is governed by the sum $r+\beta$, in agreement 
with Theorem~\ref{thm:main}.

\begin{figure}[htbp]
\centering
\subfloat[]%
{\includegraphics[width=0.48\textwidth,
 trim=0cm 0cm 0cm 0cm, clip]
 {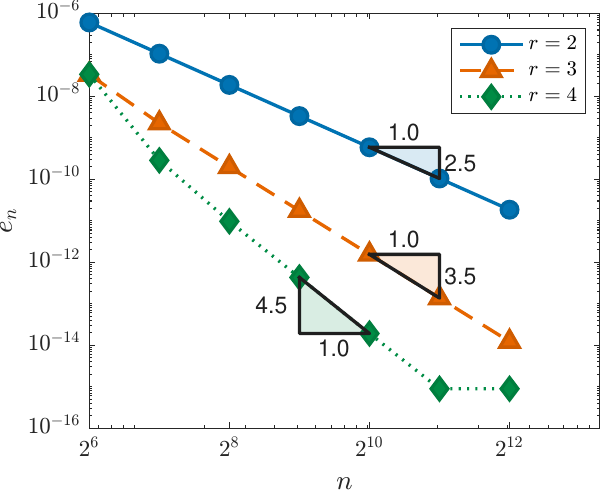}
\label{fig:effectOfr}}
\hfill
\subfloat[]%
{\includegraphics[width=0.48\textwidth,
 trim=0cm 0cm 0cm 0cm, clip]
 {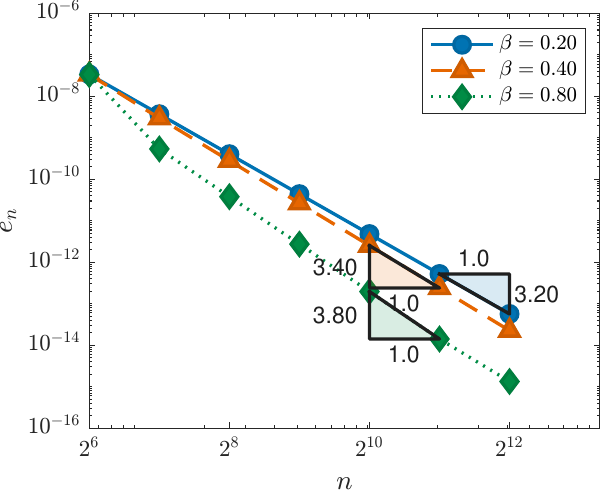}
\label{fig:effectOfbeta}}
\caption{log--log convergence plots for the generalized FC-Gram approximation of $f(x) = (1-x)^{3+\beta}$ using Gram polynomials of degree $d = 5$, periodic extension length $b = 2$, and varying $\beta$.}
\label{fig:effectOfrPlusbeta}
\end{figure}

Finally, we present an example illustrating that although ModFC and
GenFC achieve the same asymptotic rate, GenFC yields improved
approximation accuracy. Consider $f(x) = \exp(-\cos(kx))$ for
$k = 100,\, 200,\, 300$. This function is infinitely smooth, so the
asymptotic convergence rate is $d$ for both algorithms. As $k$
increases, the function oscillates more rapidly near the boundary,
causing the Gram projection coefficients to grow in magnitude and
thereby increasing the sup-norm of the blended continuation on $[1,\,b]$.
The free parameter $\sigma_\ell$ in GenFC allows one to reduce this
sup-norm, and the effect on the approximation error is visible in
Figure~\ref{fig:ModFC_Vs_GenFC_exp_minus_cos_paramx}: the gain in
accuracy widens with $k$, while the asymptotic rate $d$ remains the
same for both methods. This advantage carries over directly to
differential equation solvers, as demonstrated in Section~\ref{sec:GenFC_application}.
\begin{figure}[htbp]
\centering
\subfloat[$k = 100$]%
{\includegraphics[width=0.325\textwidth,
 trim={0pt 0pt 0pt 0pt}, clip]
{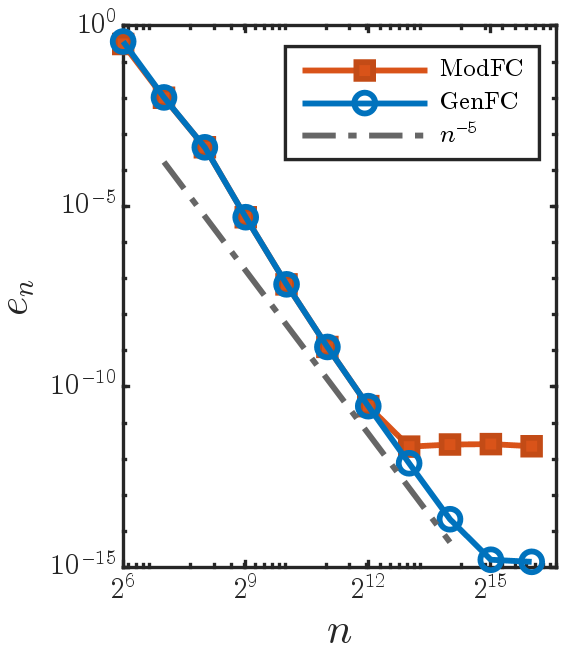}}
\hfill
\subfloat[$k = 200$]%
{\includegraphics[width=0.325\textwidth,
 trim={0pt 0pt 0pt 0pt}, clip]
{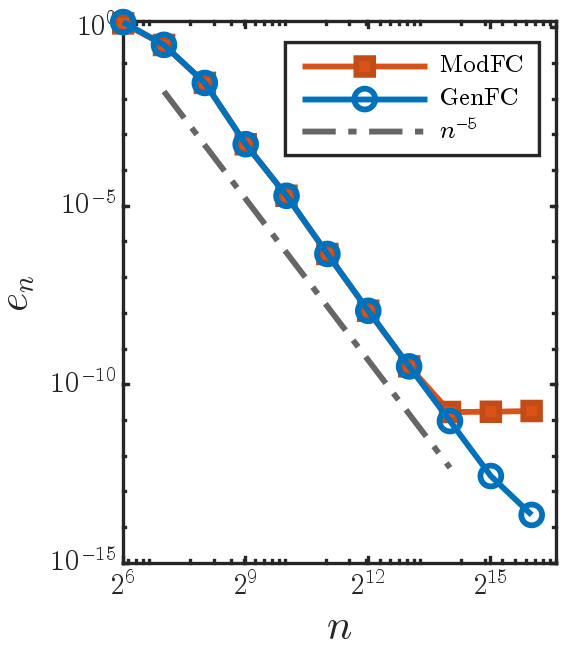}}
\hfill
\subfloat[$k = 300$]%
{\includegraphics[width=0.325\textwidth,
 trim={0pt 0pt 0pt 0pt}, clip]
{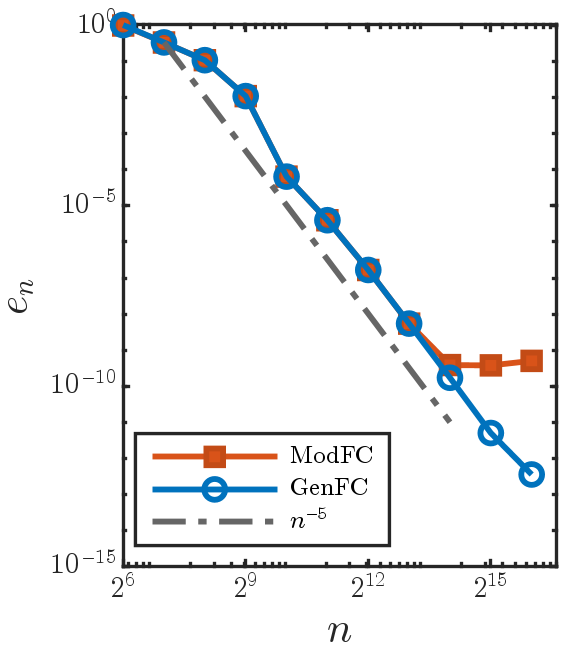}}
\caption{Interpolation errors for ModFC and GenFC applied to  $f(x)=\exp(-\cos(kx))$, with $d=5$ and $b=2$, for $k=100,200,300$.}
\label{fig:ModFC_Vs_GenFC_exp_minus_cos_paramx}
\end{figure}

\section{Application to differential equations}
\label{sec:GenFC_application}
We demonstrate that the approximation advantages of GenFC carry 
over to differential equation solvers. We begin with a spectral 
solver for two-point boundary value problems in Section~\ref{subsec:2pointBVP}, which is subsequently employed 
at each time step of the BDF based parabolic solver developed 
in Section~\ref{subsec:pde}.

\subsection{Two-Point Boundary Value Problems}
\label{subsec:2pointBVP}
Throughout this section we take $b = 2$ and $d = 5$, and write $\tau_n(f) := \tau_n^2(f)$ for brevity. We consider the two-point boundary value problem
\begin{subequations}\label{eq:bvp}
\begin{align}
 u''(x) + P(x)u'(x) + Q(x)u(x) + R(x) &= 0,
 \quad x\in(0,1), \label{eq:bvp-ode}\\
 a_0 u(0) - b_0 u'(0) = c_0,\qquad
 a_1 u(1) + b_1 u'(1) &= c_1, \label{eq:bvp-bc}
\end{align}
\end{subequations}
and seek an approximate solution of the form
\begin{equation}\label{eq:vn}
 v_n(x) = \sum_{\ell=-n}^{n-1} v_{n,\ell}\, e^{\pi i \ell x}.
\end{equation}
The coefficients $v_{n,\ell}$ in~\eqref{eq:vn} are 
determined by first constructing periodic extensions $P^e$, 
$Q^e$, $R^e$ of $P$, $Q$, $R$ respectively via the GenFC 
algorithm, and then requiring $v_n$ to satisfy
\begin{equation}\label{eq:ode-periodic}
 v_n''(x) + \tau_n(P^e)(x)\,v_n'(x)
 + \tau_n(Q^e)(x)\,v_n(x)
 + \tau_n(R^e)(x) = 0, \quad x\in\mathbb{R}.
\end{equation}
Substituting~\eqref{eq:vn} into~\eqref{eq:ode-periodic} and collecting Fourier modes leads to the rectangular linear system
\begin{align*}
 \sum_{\ell=-n}^{n-1}
 \left[(\pi i\ell)\,\tilde{c}_{k-\ell}(P^e)
 + \tilde{c}_{k-\ell}(Q^e)\right]v_{n,\ell}
 &= 0,
 \quad -2n \le k \le -n-1, \nonumber\\
 -(\pi k)^2 v_{n,k}
 +\sum_{\ell=-n}^{n-1}
 \left[(\pi i\ell)\,\tilde{c}_{k-\ell}(P^e)
 + \tilde{c}_{k-\ell}(Q^e)\right]v_{n,\ell}
 &= -\tilde{c}_k(R^e),
 \quad -n \le k \le n-1, \\
 \sum_{\ell=-n}^{n-1}
 \left[(\pi i\ell)\,\tilde{c}_{k-\ell}(P^e)
 + \tilde{c}_{k-\ell}(Q^e)\right]v_{n,\ell}
 &= 0,
 \quad n \le k \le 2n-1, \nonumber
\end{align*}
where $\tilde{c}_m(P^e),\, \tilde{c}_m(Q^e)$ and $\tilde{c}_m(R^e)$ are zero-padded for $|m|\ge n$, and the system is solved in the least-squares sense. Moreover, with reference to \eqref{eq:c_ell_g}, we have
\begin{align*}
 \tilde{c}_\ell(g) = \frac{1}{nb}\sum_{j=0}^{nb-1} g(x_j)e^{-2\pi i \ell x_j/b}.
\end{align*}
Since $v_n$ is periodic, it generally cannot satisfy the boundary conditions~\eqref{eq:bvp-bc} on its own. The final approximate solution is therefore constructed as
\begin{equation}\label{eq:un}
 u_n(x) = v_n(x) + \xi_1 h_1(x) + \xi_2 h_2(x),
\end{equation}
where $h_1$ and $h_2$ are two linearly independent solutions of the homogeneous equation~\eqref{eq:bvp-ode} with $R\equiv 0$, and the scalars $\xi_1$, $\xi_2$ are chosen so that $u_n$ satisfies~\eqref{eq:bvp-bc} exactly.
The role of $h_1$ and $h_2$ is to absorb the boundary discrepancy left by the periodic part $v_n$, while the approximation accuracy is determined entirely by the quality of the FC continuations $P^e$, $Q^e$, $R^e$ entering~\eqref{eq:ode-periodic}. A similar boundary-correction strategy is used in~\cite{BrunoPrieto2014}. The approximation error in this section is given by
\begin{equation*}
 e_n := \frac{\displaystyle\max_{0\leq j\leq n}
 \left|u_n(j/n) - u(j/n)\right|}
 {\displaystyle\max_{0\leq j\leq n}|u(j/n)|},
\end{equation*}
and $\mathrm{noc}_n$ is as defined in~\eqref{eq:noc}.

\begin{table}[htbp]
\centering
\begin{tabular}{c|cc|cc|cc}
\hline
\multirow{2}{*}{$n$}
 & \multicolumn{2}{c|}{$k=100$}
 & \multicolumn{2}{c|}{$k=200$}
 & \multicolumn{2}{c}{$k=300$} \\
\cline{2-7}
 & $e_n$ & $\mathrm{noc}_n$
 & $e_n$ & $\mathrm{noc}_n$
 & $e_n$ & $\mathrm{noc}_n$ \\
\hline\hline
$2^{6}$ & $1.93\times10^{-2}$ & ---
 & $1.05\times10^{0}$ & ---
 & $7.50\times10^{0}$ & --- \\
$2^{7}$ & $3.05\times10^{-4}$ & $5.99$
 & $1.63\times10^{-2}$ & $6.01$
 & $2.53\times10^{-1}$ & $4.89$ \\
$2^{8}$ & $2.45\times10^{-6}$ & $6.96$
 & $2.97\times10^{-4}$ & $5.78$
 & $4.51\times10^{-3}$ & $5.81$ \\
$2^{9}$ & $1.60\times10^{-8}$ & $7.26$
 & $2.58\times10^{-6}$ & $6.85$
 & $3.58\times10^{-5}$ & $6.98$ \\
$2^{10}$ & $1.06\times10^{-10}$ & $7.23$
 & $2.04\times10^{-8}$ & $6.98$
 & $3.06\times10^{-7}$ & $6.87$ \\
$2^{11}$ & $7.83\times10^{-13}$ & $7.09$
 & $1.53\times10^{-10}$ & $7.06$
 & $2.71\times10^{-9}$ & $6.82$ \\
$2^{12}$ & $1.47\times10^{-13}$ & $2.42$
 & $7.53\times10^{-12}$ & $4.35$
 & $1.11\times10^{-10}$ & $4.61$ \\
\hline
\end{tabular}
\caption{Convergence study of the generalized FC-Gram approximation applied to the BVP~\eqref{eq:bvp-coskx} with $\lambda = 0.1$ and $k = 100$, $200$, $300$.}
\label{tab:ode_genfc_coskx}
\end{table}

\begin{example}
Consider the singularly perturbed problem
\begin{equation}\label{eq:bvp-coskx}
 -\lambda\, u''(x) + u(x) = \cos(kx),
 \quad x\in(0,1),\quad u(0)=u(1)=0,
\end{equation}
for $\lambda = 0.1$ and $k = 100,\, 200,\, 300$. 
Table~\ref{tab:ode_genfc_coskx} reports the errors and convergence rates. For all three values of $k$ the method attains rates close to $d + 2 = 7$, with the errors decreasing consistently even for $k = 300$, where the source term oscillates rapidly near the boundary.

\end{example}

\begin{example}
 We consider a variable-coefficient problem with a near-singularity at the left boundary:
\begin{equation}\label{eq:bvp-xpluseps}
 (x+\varepsilon)^2 u''(x) + 2(x+\varepsilon)u'(x) - 2u(x)
 = \sin(\log(x+\varepsilon)),
 \quad u(0)=1,\ u(1)=2,
\end{equation}
for $\varepsilon = 1/5$, $1/10$, $1/20$. As $\varepsilon$
decreases, the coefficient functions become increasingly steep near $x = 0$, making this a challenging test for the FC continuation. Table~\ref{tab:ode_genfc_xpluseps} shows that GenFC attains rates close to $d+2$ and reaches near machine precision in all three cases.
\begin{table}[htbp]
\centering
\begin{tabular}{c|cc|cc|cc}
\hline
\multirow{2}{*}{$n$}
 & \multicolumn{2}{c|}{$\varepsilon=1/5$}
 & \multicolumn{2}{c|}{$\varepsilon=1/10$}
 & \multicolumn{2}{c}{$\varepsilon=1/20$} \\
\cline{2-7}
 & $e_n$ & $\mathrm{noc}_n$
 & $e_n$ & $\mathrm{noc}_n$
 & $e_n$ & $\mathrm{noc}_n$ \\
\hline\hline
$2^{6}$ & $8.45\times10^{-9}$ & --- & $1.59\times10^{-6}$ & --- & $5.12\times10^{-5}$ & --- \\
$2^{7}$ & $4.82\times10^{-11}$ & $7.45$ & $1.19\times10^{-8}$ & $7.05$ & $7.45\times10^{-7}$ & $6.10$ \\
$2^{8}$ & $7.03\times10^{-13}$ & $6.10$ & $1.49\times10^{-10}$ & $6.32$ & $1.29\times10^{-8}$ & $5.85$ \\
$2^{9}$ & $6.44\times10^{-15}$ & $6.77$ & $1.52\times10^{-12}$ & $6.62$ & $1.62\times10^{-10}$ & $6.32$ \\
$2^{10}$ & $3.33\times10^{-16}$ & $4.27$ & $1.35\times10^{-14}$ & $6.81$ & $1.62\times10^{-12}$ & $6.64$ \\
$2^{11}$ & $3.33\times10^{-16}$ & $0.00$ & $4.44\times10^{-16}$ & $4.93$ & $1.45\times10^{-14}$ & $6.81$ \\
\hline
\end{tabular}
\caption{Convergence study of the generalized FC-Gram approximation applied to
the BVP~\eqref{eq:bvp-xpluseps} for $\varepsilon = 1/5$, $1/10$, $1/20$.}
\label{tab:ode_genfc_xpluseps}
\end{table} 
\end{example}

\begin{figure}[htbp]
\centering
\subfloat[]%
{\includegraphics[width=0.48\textwidth,
 trim=0cm 0cm 0cm 0cm, clip]
 {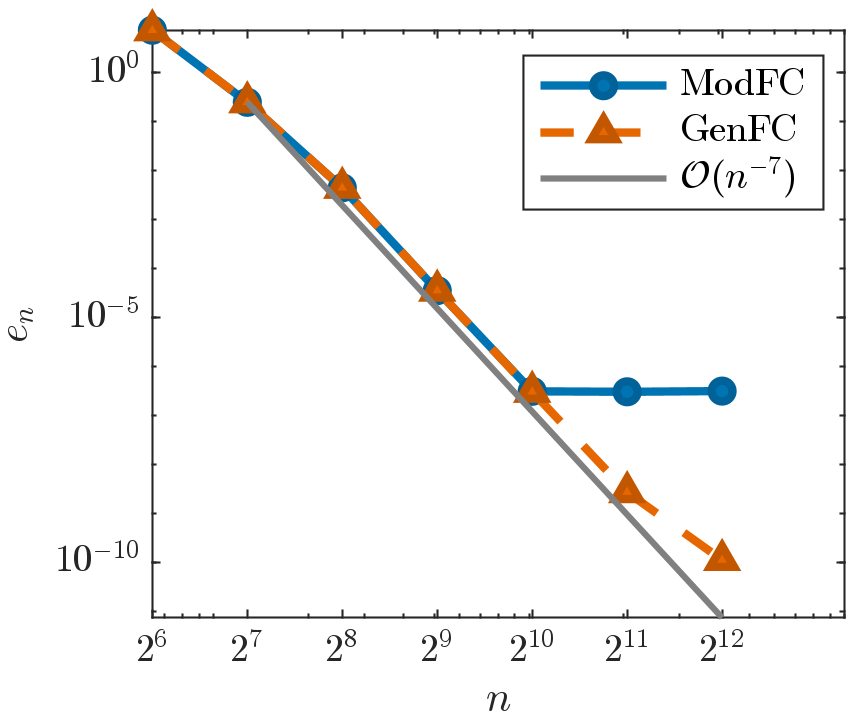}
\label{fig:ModFcVsGenFCbvpEx1}}
\hfill
\subfloat[]%
{\includegraphics[width=0.48\textwidth,
 trim=0cm 0cm 0cm 0cm, clip]
 {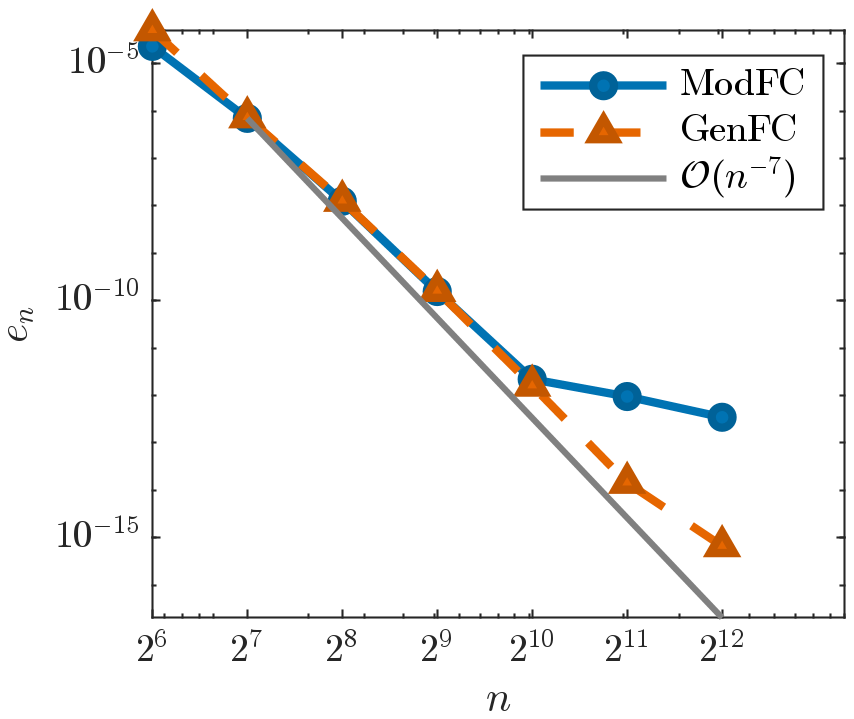}
\label{fig:ModFcVsGenFCbvpEx2}}
\caption{Convergence study of the BVP solver with modified and generalized FC-Gram with $d = 5$ and $b = 2$. Left: BVP \eqref{eq:bvp-coskx} with $\lambda = 0.1$ and $k = 300$. Right: BVP \eqref{eq:bvp-xpluseps} with $\varepsilon = 1/20$.}
\label{fig:ModFcVsGenFCbvp}
\end{figure}

Figure~\ref{fig:ModFcVsGenFCbvp} compares ModFC and GenFC on the two BVP examples. Figure~\ref{fig:ModFcVsGenFCbvpEx1} corresponds to~\eqref{eq:bvp-coskx} with $k = 300$ and Figure~\ref{fig:ModFcVsGenFCbvpEx2} corresponds to~\eqref{eq:bvp-xpluseps} with $\varepsilon = 1/20$. Although both methods achieve the same convergence rate, GenFC gains several additional digits of accuracy over ModFC, demonstrating the practical benefit of the reduced sup-norm of the blended continuation.
\begin{table}[htbp]
\centering
\begin{tabular}{lrrrrr}
\hline
 & $n$ & $n_{T}$ & $\Delta t$ &
$e_n$ & $\mathrm{noc}_n$ \\
\hline
BDF-2 & 8 & 32 & 1.56e$-$02 & 7.67e$-$03 & --- \\
 & 16 & 128 & 3.91e$-$03 & 5.15e$-$04 & 3.90 \\
 & 32 & 512 & 9.77e$-$04 & 3.75e$-$05 & 3.78 \\
 & 64 & 2048 & 2.44e$-$04 & 2.45e$-$06 & 3.93 \\
 & 128 & 8192 & 6.10e$-$05 & 1.55e$-$07 & 3.98 \\
\hline
BDF-3 & 8 & 32 & 1.56e$-$02 & 8.84e$-$03 & --- \\
 & 16 & 128 & 3.91e$-$03 & 7.78e$-$05 & 6.83 \\
 & 32 & 512 & 9.77e$-$04 & 1.80e$-$06 & 5.44 \\
 & 64 & 2048 & 2.44e$-$04 & 2.37e$-$08 & 6.24 \\
 & 128 & 8192 & 6.10e$-$05 & 5.85e$-$10 & 5.34 \\
\hline
\end{tabular}
\caption{Convergence study for the heat equation~\eqref{eq:adv-diff} ($a = 0$, $\nu = 1$) under the parabolic CFL $\Delta t = h^2$, using the BVP solver based on GenFC with $d = 5$ and $b = 2$ at each time step together with BDF-$k$ time integration.}
\label{tab:heat-parabolic}
\end{table}

\begin{table}[H]
\centering
\begin{tabular}{lrrrrr}
\hline
 & $n$ & $n_{T}$ & $\Delta t$ &
$e_n$ & $\mathrm{noc}_n$ \\
\hline
BDF-3 & 8 & 32 & 1.56e$-$02 & 8.84e$-$03 & --- \\
($\Delta t=0.5h^{5/3}$) & 16 & 102 & 4.90e$-$03 & 1.24e$-$04 & 6.16 \\
 & 32 & 323 & 1.55e$-$03 & 4.14e$-$06 & 4.90 \\
 & 64 & 1024 & 4.88e$-$04 & 1.44e$-$07 & 4.84 \\
 & 128 & 3251 & 1.54e$-$04 & 4.68e$-$09 & 4.94 \\
\hline
BDF-4 & 8 & 13 & 3.85e$-$02 & 2.39e$-$02 & --- \\
($\Delta t=0.5h^{5/4}$) & 16 & 32 & 1.56e$-$02 & 2.24e$-$04 & 6.74 \\
 & 32 & 76 & 6.58e$-$03 & 6.20e$-$06 & 5.17 \\
 & 64 & 181 & 2.76e$-$03 & 2.78e$-$07 & 4.48 \\
 & 128 & 431 & 1.16e$-$03 & 9.74e$-$09 & 4.83 \\
\hline
BDF-5 & 8 & 8 & 6.25e$-$02 & 7.79e$-$02 & --- \\
($\Delta t=0.5h$, & 16 & 16 & 3.12e$-$02 & 4.77e$-$03 & 4.03 \\
hyperbolic) & 32 & 32 & 1.56e$-$02 & 1.88e$-$04 & 4.66 \\
 & 64 & 64 & 7.81e$-$03 & 5.76e$-$06 & 5.03 \\
 & 128 & 128 & 3.91e$-$03 & 1.64e$-$07 & 5.14 \\
\hline
\end{tabular}
\caption{Convergence study for the heat equation~\eqref{eq:adv-diff} 
($a = 0$, $\nu = 1$) under the balanced CFL $\Delta t = 0.5\,h^{d/k}$, using the BVP solver based on GenFC with $d = 5$ and $b = 2$ at each time step together with BDF-$k$ time integration.}
\label{tab:heat-balanced}
\end{table}

\subsection{Application to parabolic problems}
\label{subsec:pde}
High-order spatial discretizations are most effective when paired with time-stepping schemes of matching order, since a low-order time integrator will limit the overall accuracy regardless of the spatial resolution. The BVP solver based on GenFC developed in Section~\ref{subsec:2pointBVP} provides high-order spatial accuracy at each time level, and couples naturally with BDF schemes since the implicit time discretization reduces the parabolic problem at each step to a BVP of the form~\eqref{eq:bvp-ode}, which the BVP solver based on GenFC handles directly. We demonstrate this on the advection-diffusion equation
\begin{equation}\label{eq:adv-diff}
 u_t + a\,u_x = \nu\,u_{xx} + f(x,t),
 \quad (x,t)\in(0,1)\times(0,T],
\end{equation}
with Dirichlet data $u(0,t)=g_L(t)$, $u(1,t)=g_R(t)$, and initial condition $u(x,0)=u_0(x)$. The BDF-$k$ formula with step size $\Delta t$ and coefficients $\{\alpha_j\}_{j=0}^k$ reduces~\eqref{eq:adv-diff} at each time level to the BVP
\begin{equation}\label{eq:bdf-bvp}
 -\nu\,u^{n+1}_{xx} + a\,u^{n+1}_x
 + \frac{\alpha_0}{\Delta t}\,u^{n+1}
 = -\sum_{j=1}^{k}\frac{\alpha_j}{\Delta t}\,u^{n+1-j}
 + f(\cdot,\,t_{n+1}),
\end{equation}
with boundary conditions $u^{n+1}(0)=g_L(t_{n+1})$ and $u^{n+1}(1)=g_R(t_{n+1})$. This is of the form~\eqref{eq:bvp-ode} with constant coefficients $P = -a/\nu$ and $Q = -\alpha_0/(\nu\Delta t)$, so the null-space basis is explicit:
\begin{equation}\label{eq:null-adv}
 h_1(x) = e^{\rho_1 (x-1)},\quad h_2(x) = e^{\rho_2 x},
 \qquad
 \rho_{1,2} = \tfrac{1}{2}\!\left(\tfrac{a}{\nu}
 \pm\sqrt{\bigl(\tfrac{a}{\nu}\bigr)^2
 +\tfrac{4\alpha_0}{\nu\Delta t}}\right).
\end{equation}
We assume $a \ge 0$ without loss of generality; the case $a < 0$ follows by the reflection $x \mapsto 1-x$.

Each time step requires a single solve at cost $O(n\log n)$, and the startup values $u^1, \ldots, u^{k-1}$ are computed by a DIRK scheme of matching order, with the BVP solver 
based on GenFC used at each stage. The error is measured by
\begin{equation*}
 e_n := \frac{\displaystyle\max_{0\leq j\leq n}
 \left|u_n(j/n,T) - u(j/n,T)\right|}
 {\displaystyle\max_{0\leq j\leq n}|u(j/n,T)|},
\end{equation*}
and $\mathrm{noc}_n$ is as defined in~\eqref{eq:noc}. The periodic extension length $b = 2$ is used throughout in the GenFC based PDE solver.

\begin{example}\label{Problem:heat}
We take $a = 0$, $\nu = 1$, and $T = 0.5$, with the manufactured solution
\begin{equation*}
 u(x,t) = \cos(15t)\sin(5x+5),
\end{equation*}
and Dirichlet data $g_L(t) = \cos(15t)\sin 5$, $g_R(t) = \cos(15t)\sin 10$.

Tables~\ref{tab:heat-parabolic} and~\ref{tab:heat-balanced} 
report the errors in the supremum norm under two CFL scalings. We invoke the BVP solver with $d = 5$. Under the parabolic CFL $\Delta t = \,h^2$, BDF-2 achieves rate $4$ and BDF-3 achieves rate $5$, with higher BDF orders giving the same rate as BDF-3. Under the balanced
CFL $\Delta t = 0.5\,h^{d/k}$, rate $5$ is observed for BDF-3, BDF-4, and BDF-5. The step count scales as $n^{d/k}$, decreasing as $k$ increases. At BDF-5 the CFL exponent gives $n_{\text{steps}} = n$, with error $1.64 \times 10^{-7}$ at $n = 128$ requiring exactly $n$ time steps.
\end{example}

\begin{example}\label{Problem:adv-diff}
We take $a = 1$, $\nu = 10^{-3}$, and $T = 1$, giving a convection-dominated problem with P\'eclet number $a/\nu = 10^3$. The manufactured solution is
\begin{equation*}
 u(x,t) = \cos(5t)\sin(10x+10),
\end{equation*}
with Dirichlet data $g_L(t) = \cos(5t)\sin 10$ and $g_R(t) = \cos(5t)\sin 20$.

\begin{figure}[H]
\centering
\subfloat[]%
{\includegraphics[width=0.48\textwidth,
 trim=0cm 0cm 0cm 0cm, clip]
 {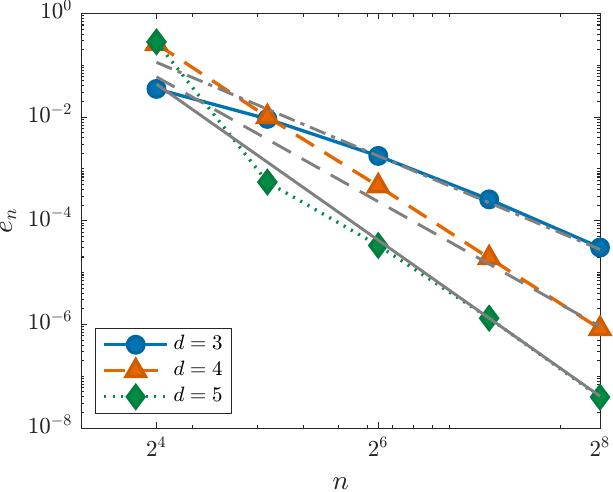}
\label{fig:adv_diff_convergence_parabolic}}
\hfill
\subfloat[]%
{\includegraphics[width=0.48\textwidth,
 trim=0cm 0cm 0cm 0cm, clip]
 {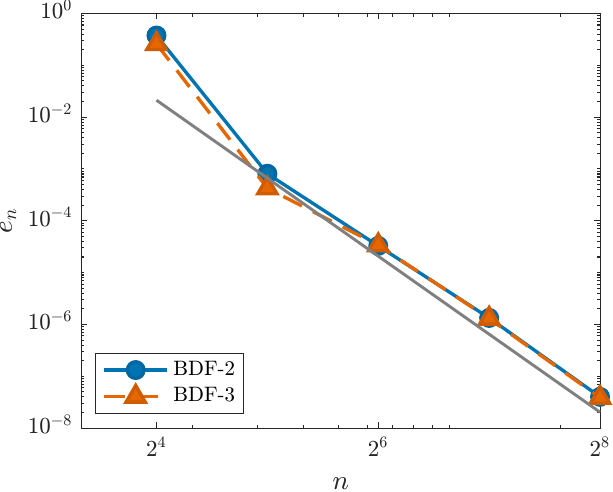}
\label{fig:adv_diff_convergence_balanced}}
\caption{Convergence study for the advection-diffusion equation~\eqref{eq:adv-diff} with $a = 1$ and $\nu = 10^{-3}$, using the BVP solver based on GenFC at each time step. Left: parabolic CFL $\Delta t = \,h^2$ with BDF-$3$ and varying $d = 3,\, 4,\, 5$. Right: balanced CFL $\Delta t = 0.5\,h^{d/k}$ with $d = 5$ and BDF-$k$ for $k = 2, \, 3$.}
\label{fig:adv_diff_convergence}
\end{figure}
Figure~\ref{fig:adv_diff_convergence_parabolic} shows the errors in the supremum norm under the parabolic CFL $\Delta t = h^2$ with BDF-$3$ time integration and $d = 3,\, 4,\, 5$. The observed rates are $3$, $4$, and $5$ respectively, that is, a spatial rate of $d$, the number of Gram polynomials used by the GenFC based BVP solver. 
Figure~\ref{fig:adv_diff_convergence_balanced} shows the corresponding results under the balanced CFL $\Delta t = 0.5\,h^{d/k}$ with $d = 5$ and $k = 2,\,3$, where the observed rate is $5$ in both cases. The proposed PDE solver thus attains high-order accuracy throughout, without any filtering.

\end{example}

\section{Conclusion}
\label{sec:conclusion}
In this paper we have studied GenFC, a generalized FC-Gram approximation framework. A key ingredient of the FC-Gram strategy is the construction of blending continuations of Gram polynomials over the extension interval $[1,b]$, and the quality of these continuations directly governs the approximation accuracy. GenFC provides explicit control over this construction through a broad family of shape functions.

In Theorem~\ref{thm:main} we prove convergence at the rate $O(n^{-\min(r+\beta,\,d)})$ in the supremum norm on $[0,\,1]$. The estimate applies to any admissible shape-function family and any $f$ of the required regularity. The modified FC-Gram method of~\cite{Nainwal_Anand_ModFC_2025} is recovered as a special case. Numerical experiments in a variety of settings confirm the theoretical predictions.

The flexibility in the choice of shape function leads to improved accuracy over ModFC, particularly when the boundary data are large, and this advantage carries over to high-order solvers for linear ODEs in one dimension. This BVP solver is subsequently used at each time step of a BDF scheme for parabolic PDEs, and high-order accuracy is demonstrated for the heat and advection-diffusion equations. 
\begin{figure}[H]
\centering
\subfloat{%
 \includegraphics[width=3in]{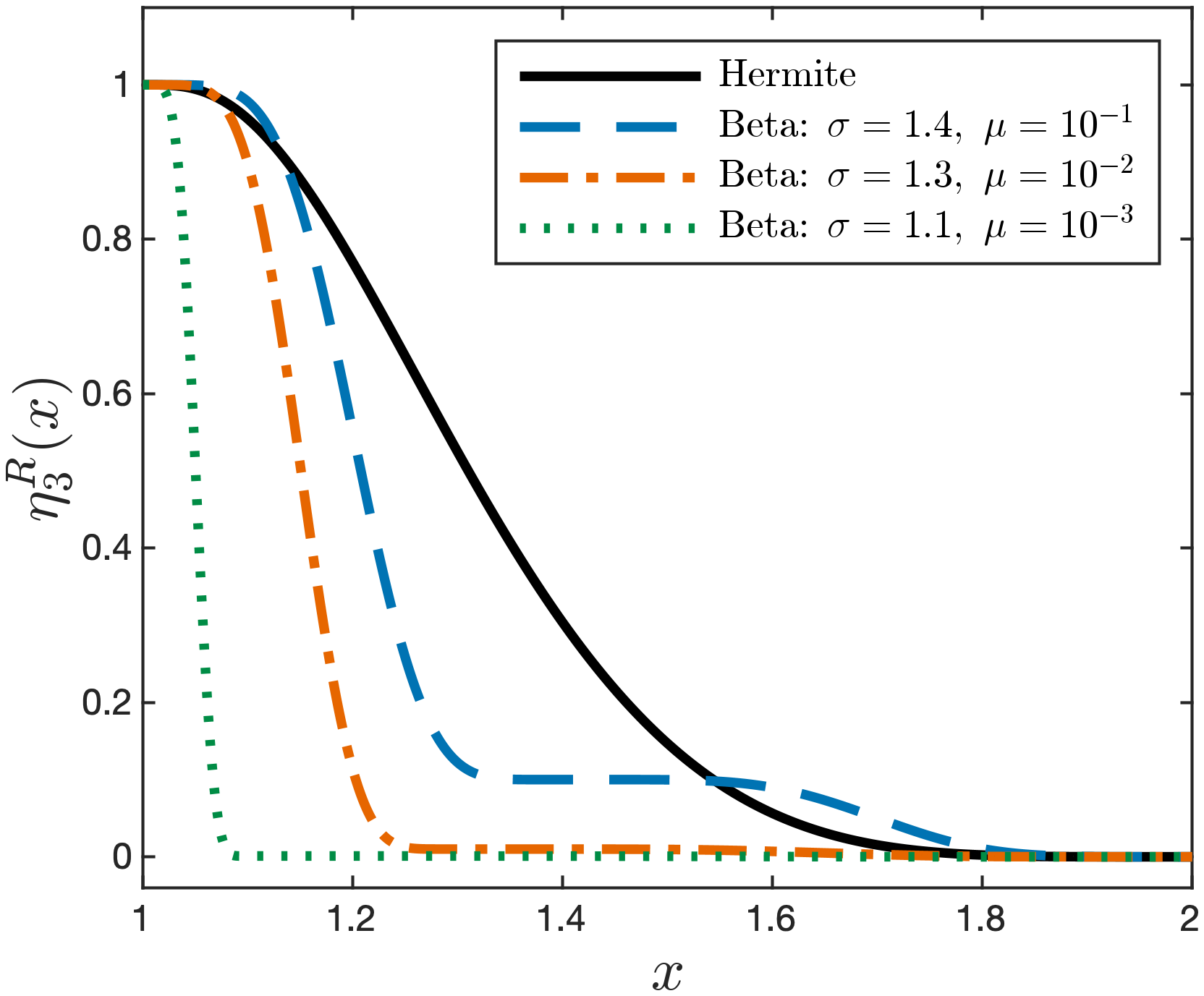}}%
\hfill
\subfloat{\includegraphics[width=3.2in]{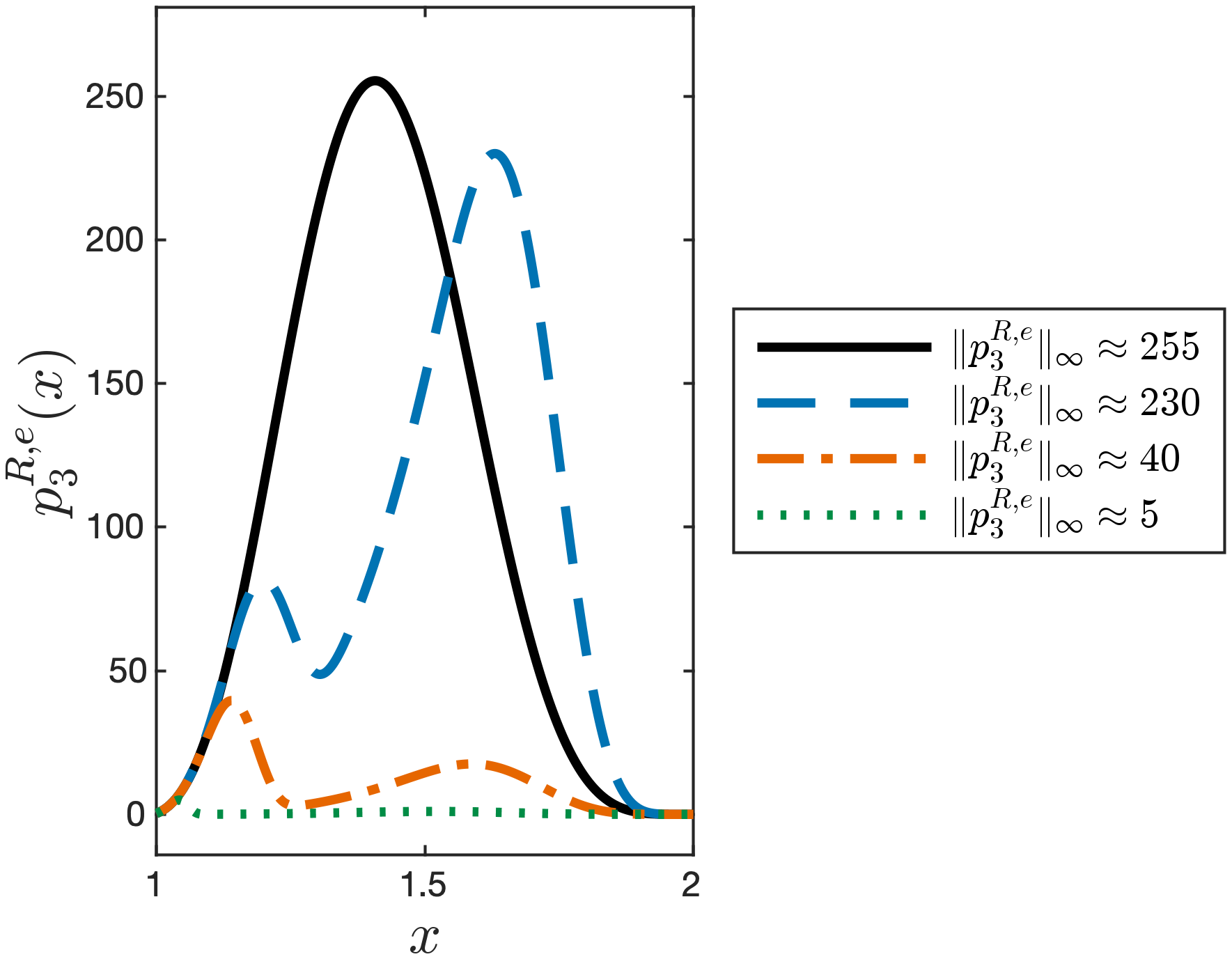}}
\caption{Left: Shape functions on $[1,\,2]$ for the regularized Beta family with several choices of $(\sigma_\ell,\, \mu_\ell)$ alongside the two-point Hermite polynomial of ModFC. Right: The corresponding blended continuations $p_3^{R,e}$, illustrating how the choice of shape parameters controls the supremum norm of the continuation.}
\label{fig:ModFC_GenFC_shapefunction}
\end{figure}

One natural direction for shape function design is a two-stage strategy, where the shape function first drops from $1$ to an intermediate level $\mu_\ell$ on $(1,\sigma_\ell)$ and then from $\mu_\ell$ to zero on $(\sigma_\ell, b)$. This additional degree of freedom provides finer control over the sup-norm of the blending continuation, as illustrated in Figure~\ref{fig:ModFC_GenFC_shapefunction} for the Beta shape function with various choices of $(\sigma_\ell, \mu_\ell)$. A rigorous study of optimal parameter selection within this family, a convergence analysis of the ODE and PDE solvers, and the extension of the framework to higher spatial dimensions are all directions we plan to pursue.

\bibliographystyle{plain}
\bibliography{references}

\end{document}